\documentclass[BCOR8mm,DIV14]{scrartcl}
\usepackage{umlaut}
\usepackage{bbm}
\usepackage[english]{babel}
\usepackage{amsmath,amssymb,amsthm}
%
\setlength{\parindent}{0mm}
\DeclareMathOperator{\diag}{diag}
\newtheorem{theorem}{Theorem}[]

\newtheorem{corollary}{Corollary}

\newtheorem{fact}{Fact}
\newtheorem{remark}{Remark}
\newtheorem{definition}{Definition}
\begin{document}
 \title{   Another   View   on   the    Hölder Inequality   }
\author{VOLKER  THÜREY  \\  Rheinstr. 91  \\   28199 Bremen, Germany   
     \thanks{T:  49 (0)421/591777    \  \   E-Mail: volker@thuerey.de }   }
    \date{\today}
\maketitle
   \begin{center} Subj-class: FA  \quad \quad 	  MSC-class:  46B, 46G 
                       \quad \quad  Keywords: Hölder inequality, Operator norm     
    \end{center}           
    \begin{abstract}    
    \centerline{Abstract}  
   Every diagonal matrix  {\bf D} yields  an  endomorphism  on  the  $n$-dimensional  complex vector space.
   If one provides     
       the  $\mathbbm{C}^{n}$   with    Hölder norms,  we can compute the operator norm of  {\bf D}. 
    We define homogeneous weighted spaces as a generalization of normed spaces.
   We generalize the  Hölder norms for negative values,  
   this leads to  a  proof of  an extended version of the Hölder inequality. 
   Finally, we formulate this version also for measurable functions.   
	  \end{abstract}       
	    \section{Introduction }  
	  In this paper we generalize the  well-known Hölder inequality  (see, for instance,  \cite{Meise/Vogt}
	  or   \cite{Elstrodt}, or other books on functional analysis). So far nobody discussed
	   the case of negative exponents in all details (for some discussions see e.g. \cite{Mitrinovic/Vasic},p.51).     
     The main reason for this might be the fact that for $ p < 0$   the map $ (x_1, x_2, \ldots , x_n ) \longmapsto 
     \sqrt[p]{|x_1|^{p}+|x_2|^{p}+ \ldots + |x_n|^{p}} $ \ does not yield  a norm for  $ \mathbbm{C}^{n}$,
     because  it is neither positive definit, nor the triangle inequality holds. 
     Although it is worth to consider  this map, since  this leads to a natural extension of 
     the often used   Hölder inequality. 
      To get this result,  we  first  introduce { \it homogeneous weighted spaces } 
      generalizing  normed spaces.  
      Then we define  { \it Hölder weights } as a generalization of the  Hölder  norms, 
      and the { \it operator weight } as a generalization of the operator norm.
      In our first rather inconvenient theorem 
       we compute  the   operator weight of a diagonal matrix. 
    The main result of this paper is then an extension of the Hölder inequality.
     Finally,  we  prove an analogic  result for measurable functions.   But here the proofs rely on the  
       standard Hölder inequality.  \\  \\  
  Let  \ $ X $ \ be a  complex vector space.  \
  Let \   $\|..\|$ \  denote a positive functional on $ X$, that means: \  
   $\|..\|$: \ $ X \longrightarrow$  $\mathbbm{R}^{+}  \cup  \{ 0 , \infty\}$. \   
  We consider three conditions,   \\
  %
       (1)  \ \         $\|  \vec{0}\|$  =  0   \   
                     \rm  and   \  for  all  \it z \ $\in \ \mathbbm{C}$ \ \rm and  all  
                      \it   $\vec{x}  \in X$ \  \rm we have: \
                    \it  $\|z \cdot  \vec{x}\|$ =   $|z|$ $ \cdot  \|\vec{x}\|$ \quad  \hfill \rm ("homogenity"),          \\
       (2) \ \      \rm    \it $ \infty \notin \rm image(\|..\|) $ \rm  \   and
              \  \rm  $\| \vec{x}\| = 0$ { \rm if and only if}  $ \vec{x} = \vec{0}$  
                                                            \hfill  \quad \rm ("positive \ definiteness"), \\
       (3) \  \    \rm  For \ all \ \it  $\vec{x} , \vec{y} \in X$ \ \rm  one \ has \ 
                   \it   $\|\vec{x}+\vec{y}\| \leq  \|\vec{x}\|  + \|\vec{y}\|$ 
                                                             \hfill \quad \rm ("triangle \ inequality").                                         
   \begin{definition}  \rm   \quad  \\                      
    $  \begin{array}[ ]{lll}  \rm  
     \ \ \rm If \  \ \   \|..\|  \  fullfils \ (1) & \rm then  & \ \rm we \ call \ \|..\| \ \  a \  
      { \it homogeneous \ weight }  \ \rm on \ \it X,  \\    
     \ \ \rm if \ \rm \ \   \|..\|  \  fullfils \ (1) , (2) & \rm then &  \ \rm  we \ call \ \|..\| \ \  a \  
      { \it pseudonorm }  \ \rm on \ \it X, and  \\ 
     \ \ \rm if \  \ \   \|..\|  \  fullfils \ (1), (2) \ and \ (3) & \rm then & \  \|..\| \ \  \rm is  \ called 
       \ a \ { \it norm } \rm \ on \ \it X.  
  
                                                              \end{array}   $  
    \end{definition} 
    Acording to this three cases we call                                                           
       the pair \   ( $ X , \|..\|$ ) \ a homogeneously weighted vector space  (or {\bf hw space}), 
       a pseudonormed   vector space, or  a normed vector space, respectively.      
       \begin{definition}  
     \rm For a linear map     $F :  ( X , \|..\|_X ) \ \longrightarrow \ ( Y , \|..\|_Y ) $ \  
          between   complex  homogeneously weighted vector spaces we denote by    \  
   $ \| F \|   :=      \inf \ \{ {\cal C} > 0 \ | \ \forall  \vec{x} \ \in \ X : 
    \| F(\vec{x})\|_{Y} \leq  {\cal C} \cdot  \|\vec{x}\|_{X} \} $  \
    \rm  \ the  { \it operator weight } of  \ $ F $ \ with respect to $ \|..\|_{X}, \ \|..\|_{Y} $.  
     \end{definition}
   Let  {\bf A}  be  a \ 
  complex valued $m\times n$ matrix, \ $ m,n$  $\in$  $ \mathbbm{N}$.  \ 
  Then  {\bf A} defines a linear map, \ \  {\bf A}: $\mathbbm{C}^{n} \rightarrow \mathbbm{C}^{m}$.  \
  Let \ \ $\|..\|_{X} ,  \: \|..\|_{Y}$  \ be homogeneous weights  on 
   $ X := \mathbbm{C}^{n}$ and $ Y := \mathbbm{C}^{m}$,       respectively. \    Then the operator weight is   \ \   
    $\|{\bf A}\| \  =  \  \inf \{ {\cal C} > 0 \ | \ \forall  \vec{x} \in  \mathbbm{C}^{n} : 
    \|{\bf A}\vec{x}\|_{Y} \leq  {\cal C} \cdot  \|\vec{x}\|_{X} \} $. \\  
  This definition turns  \  \{{\bf A} $|$ {\bf A}: ($\mathbbm{C}^{n},\|..\|_{X})  \rightarrow                                                (\mathbbm{C}^{m},\|..\|_{Y}) $  \  and \  {\bf A} is linear\}  \  into
   a  {\bf hw space}, which is a pseudonormed space, or a normed space,  respectively, 
   depending on the properties of the homogeneous weights \ $ \|..\|_{X}$   \ and \ $\|..\|_{Y}$.  \\
  Now  \ for every \ $n$ $\in$  $ \mathbbm{N}$ \ 
  \ and for every  \ $p$ $\in$ \  $ \{\infty, -\infty \}   \cup \: \mathbbm{R}\backslash \{0\} $  \ \ 
  we construct a homogeneous weight on $ \mathbbm{C}^{n}$.  
    \begin{definition}  \quad   \\    \rm
     For  $\vec{x} = (x_1 , \ \ldots \ ,x_n ) \in  \mathbbm{C}^{n} $ \
    and for \   $p$ $\in$  $\left( 0, \infty \right)$ \ set   \  
   $\|\vec{x}\|_{p}  := \sqrt[p]{|x_1|^{p}+|x_2|^{p}+ \ \ldots \ + |x_n|^{p}}$,  \\
   and    for  $p$ $\in$  $\left( -\infty , 0 \right)$ we set  
   \rm  \[   \|\vec{x}\|_{p}  :=             
             \begin{cases}
           \sqrt[p]{|x_1|^{p}+|x_2|^{p}+ \ \ldots \ + |x_n|^{p}}  \quad  &
       \quad \mbox{for} \quad       \prod_{i=1}^{n} x_i \neq 0   \\    
         0   &  \quad \mbox{for} \quad        \prod_{i=1}^{n} x_i = 0   \\            
                         \end{cases}
                                     \]   
   \rm    and  \  \ $\|\vec{x}\|_{\infty} := \max \{ \: |x_i|\: \   | \ i \in \{1,2, \ldots , n\} \  \} , $   \ \
          and  \  \ $\|\vec{x}\|_{-\infty} := \min \{ \: |x_i|\: \   | \ i \in \{1,2, \ldots , n\}  \ \} $. 
     These homogeneous weights will be called  the { \it Hölder weights }  on   $\mathbbm{C}^{n}$.     
   \end{definition}
    \begin{remark}  \rm 
     Note that for $p$ $<$ 0 \ we have \  $\|\vec{x}\|_{p} =  0 \ \Longleftrightarrow \ 
     \exists \ j \in \ \{ 1, \ldots , n \} $
                               \rm   \  and  \ $x_j = 0  $.  
        Furthermore, for all  $n$ $>$ 1, these  Hölder weights   
      are  pseudonorms if and only if \ $p$ $>$ 0, \ and they are  norms if and only if \ \ $p$ $\geq$ 1. \
     \end{remark}    
    In the case of a diagonal matrix \ {\bf D}, \   
    $ {\bf D} : \ ( \mathbbm{C}^{n},\|..\|_{s})$  $\rightarrow$ ($\mathbbm{C}^{n},\|..\|_{t})$  \ 
     and  \    $\|..\|_{s} , \|..\|_{t}$ \ \  are Hölder weights,  one easily verifies that 
     \[   \|{\bf D}\| 
      = \sup \ \{ \|{\bf D}\vec{x}\|_{t} \: | \:  \vec{x} \in  \mathbbm{C}^{n} \
                    \begin{rm}   and  \end{rm} \    \|\vec{x}\|_{s} = 1 \} . \] 
    This  equality does not hold in general for arbitrary  linear maps \  
     F :  (X,$\|..\|_X$ ) \ $\longrightarrow$ \ ( Y,$\|..\|_Y$) \ due to the fact that there need not to
    exist an $\vec{x}$ with  $  \|\vec{x}\|_{X} = 1 $.

     Let us now restrict our attention to diagonal matrices to state  our first theorem.     
      \begin{theorem}
   For \ $ n \geq $ 2 \  and \ 
              $\vec{v} := ( v_1,  \ldots , v_n ) \in  \mathbbm{C}^{n} $    \  \  \  
     let      $   \begin{array}[ ]{ccc}
                    \quad    \  {\bf D}  :=  \  &
                                                     \left(   \begin{array}[ ]{ccc}
                                                                         v_1  &         &  {  \bf 0 }   \\
                                                                              &  \ddots &  \\
                                                               {  \bf 0 }     &         & v_n   	
                                                              \end{array}              \right)           &        
              \end{array}     $                   
   be  the  associated  n-dimensional diagonal matrix, and let    \   
   $ s, t \in \  \mathbbm{R} \backslash \{0\} \cup \{+\infty , \: -\infty \}$. 
    Thus \  {\bf D} \    
       is a linear    endomorphism on    $\mathbbm{C}^{n}$. 
      Then we have for the operator weight \ $ \|{\bf D}\| $ \ with respect to \ $ \|..\|_{s} \ and \ \|..\|_{t}$                      
    \[
       \|{\bf D}\|_{s,t} :=   \| {\bf D}\| =             
             \begin{cases}
          \infty                     \ &
          { \rm  if }   \quad  
        ( s < 0 < t  \ \wedge    \    \vec{v} \neq \vec{0} )  \hfill \qquad  (\mathbb{A}), \\                
       \|\vec{v}\|_t  \  &
          { \rm  if }  \quad   ( \vec{v}  =  \vec{0} ) \ \ \ \vee \ \ \ ( t < 0   \wedge  \prod_{i=1}^{n} v_i =  0 ) 
    \ \ \  \vee \ \ \  (  s = \infty )   \hfill \qquad  (\mathbb{B}), \\     
       \|\vec{v}\|_{\frac{s \cdot t}{s-t}}  \  &
           { \rm  if }  \quad       
         ( -\infty < t < s < 0 \ \wedge \ \prod_{i=1}^{n} v_i \neq 0 )   \   \quad    \vee  \\  & \quad \quad \quad             ( 0 < t < s < \infty ) \ \  \vee  \ \   \ ( -\infty < t < 0 < s < \infty ) \hfill \qquad   (\mathbb{C}),   \\
           \|\vec{v}\|_{\infty}    \ &
         { \rm  if }  \quad       
      ( \ s \leq t < 0 \ \wedge \ \prod_{i=1}^{n} v_i \neq 0 ) \ \  \vee \ \ 
                                                              ( 0 < s \leq t ) \hfill \qquad  (\mathbb{D}), \\
         \|\vec{v}\|_{-s}  \  &
         { \rm  if }  \quad     ( \ t = -\infty \ \wedge \ \prod_{i=1}^{n} v_i \neq 0 ) \hfill \qquad   (\mathbb{E}).                             \end{cases}
                                     \]        
 \end{theorem}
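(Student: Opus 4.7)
The plan is to use homogeneity to reduce the computation to $\sup\{\|\mathbf{D}\vec x\|_t/\|\vec x\|_s : \|\vec x\|_s \neq 0\}$ (handling inputs with $\|\vec x\|_s = 0$ separately), and then to partition according to the signs of $s,t$ together with the support of $\vec v$, matching the five cases $(\mathbb{A})$--$(\mathbb{E})$ stated.

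First I would dispose of the blow-up case $(\mathbb{A})$: if $s<0<t$ and some $v_j\neq 0$, take $\vec x$ to be the $j$th standard basis vector; the negative-$s$ convention forces $\|\vec x\|_s=0$ while $\|\mathbf D\vec x\|_t=|v_j|>0$, so no finite $\mathcal{C}$ works and $\|\mathbf D\|=\infty$. The degenerate case $(\mathbb{B})$ splits into three easy subcases: $\vec v=\vec 0$ is trivial; if $t<0$ with some $v_i=0$ then $\mathbf D\vec x$ always has a zero coordinate so $\|\mathbf D\vec x\|_t\equiv 0$; if $s=\infty$, the pointwise estimate $|x_i|\leq\|\vec x\|_\infty$ (applied with the sign reversal that comes from $a\mapsto a^t$ being decreasing for $t<0$) yields $\|\mathbf D\vec x\|_t\leq\|\vec v\|_t\|\vec x\|_\infty$, with equality at $\vec x=(1,\dots,1)$. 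The monotonicity case $(\mathbb{D})$ combines the bound $|v_i|\leq\|\vec v\|_\infty$ coordinate-wise with $\|\vec y\|_t\leq\|\vec y\|_s$ for $0<s\leq t$ (and its analogue for $s\leq t<0$), sharpness being attained by concentrating $\vec x$ on an index that maximizes $|v_i|$. Case $(\mathbb{E})$ with $t=-\infty$ is the dual extremal problem of maximizing $\min_i|v_i x_i|$ subject to $\|\vec x\|_s\leq 1$, whose optimum $|x_i|\propto 1/|v_i|$ evaluates precisely to $\|\vec v\|_{-s}$.

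The heart of the theorem is case $(\mathbb{C})$. For the baseline subcase $0<t<s<\infty$, I would set $p:=s/t>1$ and $q:=s/(s-t)$, so that $\tfrac1p+\tfrac1q=1$, and apply the classical H\"older inequality to $\sum_i |v_i|^t|x_i|^t$:
\[
\sum_i |v_i|^t|x_i|^t \;\leq\; \Bigl(\sum_i |v_i|^{tq}\Bigr)^{1/q}\Bigl(\sum_i |x_i|^{tp}\Bigr)^{1/p},
\]
which rearranges to $\|\mathbf D\vec x\|_t\leq\|\vec v\|_{st/(s-t)}\|\vec x\|_s$, with sharpness witnessed by $|x_i|^s\propto|v_i|^{st/(s-t)}$. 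The mixed-sign subcase $-\infty<t<0<s<\infty$ and the fully negative subcase $-\infty<t<s<0$ I would reduce to this baseline via the substitution $y_i:=1/(v_i x_i)$, which is legitimate because in both subcases one may restrict attention to $\vec x$ with no zero coordinate (otherwise both sides vanish under the negative-$t$ convention); this inversion exchanges the roles of $v$ and $x$ and converts the negative-exponent weight into a positive-exponent one, after which the baseline inequality can be invoked and the exponent unwound.

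The main obstacle will be precisely this last reduction: checking that after the substitution the exponent on $\vec v$ really transforms into $st/(s-t)$ in all three subcases of $(\mathbb{C})$ simultaneously, and keeping track of the convention $\|\vec x\|_p=0$ when $p<0$ and $\prod x_i=0$. Because $\|\cdot\|_p$ is not a norm for $p<0$, one cannot appeal to subadditivity but must argue directly from the explicit formula, carefully distinguishing the vectors on which the weight vanishes and verifying sharpness by exhibiting an extremizer in every subcase.
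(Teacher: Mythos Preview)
Your outline is sound, and in places cleaner than the paper's own argument (your Case~$(\mathbb{A})$ via a single basis vector $e_j$ with $\|e_j\|_s=0$ is tidier than the paper's explicit sequence $\vec a_k$). The principal divergence is in Cases~$(\mathbb{C})$ and~$(\mathbb{D})$. The paper proceeds by first solving the two-dimensional problem through explicit one-variable calculus---writing $\|\widetilde{\mathbf D}(y,\sqrt[s]{1-y^s})\|_t^{\,t}$ as a function $G^t(y)$, locating its unique interior critical point $y_E$, and comparing with the boundary values---and then pushes to general $n$ by induction, splitting off the last coordinate and applying the $n=2$ and $n-1$ cases in tandem. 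You instead invoke the classical H\"older inequality (with conjugate pair $p=s/t$, $q=s/(s-t)$) to obtain the bound for all $n$ at once, and propose to handle the remaining sign configurations by an inversion substitution; that substitution is essentially the standard proof of the reverse H\"older inequality, and the paper itself uses exactly this device later, when deducing Theorem~2 and Corollary~3. So your route is logically equivalent but reverses the paper's order: you take H\"older as input and read off the operator-weight formula, whereas the paper computes the operator weight from scratch (no H\"older assumed) and then harvests the generalized H\"older inequality as a corollary. One small caution on sharpness in Case~$(\mathbb{D})$: ``concentrating $\vec x$ on the maximising index'' works verbatim only when $s>0$; for $s\le t<0$ the unit vector $\vec e_{\sf M}$ has $\|\vec e_{\sf M}\|_s=0$, and one needs a sequence (the paper sends $a_{k,{\sf M}}\to 1$ while the remaining coordinates tend to $+\infty$) to witness that the bound $\|\vec v\|_\infty$ is actually attained in the supremum.
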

  Note that all possible cases are covered  by $(\mathbb{A}) - (\mathbb{E})$. \
  The above theorem allows us to deduce a theorem and two corollaries.   
  \begin{corollary}
     Let  \  $ s, t \in \ \mathbbm{R} $ \ \ such that \ \  
    $  0  \neq s \cdot t $, \ \ and \   for \ 
      $  {\bf D} := \diag (v_1, \ldots, v_n)$   with \ $ \prod_{i=1}^{n} v_i \  \neq \ 0 $  \ \ \
      we have     \[   \|{\bf D}\|_{s,t}  \ = \  \|{\bf D}\|_{-t,-s} \ . \]  
     \end{corollary}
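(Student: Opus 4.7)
The plan is to reduce the corollary to a case-by-case verification using Theorem 1. Since the hypotheses exclude $s,t \in \{\pm\infty\}$ and force $\prod v_i \neq 0$ (hence $\vec{v} \neq \vec{0}$), the only applicable branches of Theorem 1 for both $(s,t)$ and for $(-t,-s)$ are $(\mathbb{A})$, $(\mathbb{C})$, and $(\mathbb{D})$. I would therefore first observe that cases $(\mathbb{B})$ and $(\mathbb{E})$ are automatically excluded under the corollary's hypotheses, so only the three remaining branches need to be matched up.

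Next I would record the key algebraic observation that drives the whole argument: the transformation $(s,t) \mapsto (-t,-s)$ is an involution on $(\mathbb{R}\setminus\{0\})^2$, and the exponent appearing in case $(\mathbb{C})$ is invariant under it, since
\[
 \frac{(-t)\cdot(-s)}{(-t)-(-s)} \;=\; \frac{s\cdot t}{s-t}.
\]
Likewise the values $\|\vec{v}\|_\infty$ in $(\mathbb{D})$ and $\infty$ in $(\mathbb{A})$ depend only on $\vec{v}$, not on $(s,t)$. Hence verifying the corollary amounts to checking that the involution preserves the partition of $(\mathbb{R}\setminus\{0\})^2$ given by cases $(\mathbb{A})$, $(\mathbb{C})$, $(\mathbb{D})$.

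I would then split into the six sign patterns for $(s,t)$: \ $0<t<s$;\ $0<s\le t$;\ $s<0<t$;\ $t<0<s$;\ $s<t<0$;\ $t<s<0$. For each pattern I would identify the branch of Theorem 1 that applies to $(s,t)$, then compute the sign pattern of $(-t,-s)$ and identify its branch. For example, $0<t<s$ lands in $(\mathbb{C})$; flipping gives $-s<-t<0$, i.e.\ new-$s = -t$ and new-$t = -s$ satisfy $-\infty<\text{new-}t<\text{new-}s<0$, which is again $(\mathbb{C})$, and the exponents match by the identity above. The cases $0<s\le t$ and $s<t<0$ both produce $(\mathbb{D})$ on each side; $s<0<t$ and $t<0<s$ go to $(\mathbb{A})$ and $(\mathbb{C})$ respectively, each self-matched under the involution.

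The main obstacle, such as it is, is bookkeeping: making sure the weak inequality in $(\mathbb{D})$ (with $s\le t$) and the strict inequalities in $(\mathbb{C})$ are carried through the sign flip correctly, and that a pair on the boundary (say $s=t$) falls into the same branch on both sides. Once the six sign patterns are tabulated, the equality $\|{\bf D}\|_{s,t} = \|{\bf D}\|_{-t,-s}$ follows directly from Theorem 1, because in every case the right-hand side of Theorem 1 depends only on $\vec{v}$ and on the invariant quantity $\tfrac{st}{s-t}$.
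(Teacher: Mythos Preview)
Your proposal is correct and follows essentially the same route as the paper: the paper's own proof consists of exactly the two observations you single out, namely the identity $\frac{s\cdot t}{s-t}=\frac{(-t)\cdot(-s)}{(-t)-(-s)}$ and the equivalence $s\le t \Longleftrightarrow -t\le -s$, which together show that the involution $(s,t)\mapsto(-t,-s)$ preserves the relevant branches $(\mathbb{A})$, $(\mathbb{C})$, $(\mathbb{D})$ of Theorem~1 and leaves the resulting value unchanged. Your version simply unpacks this into the explicit sign-pattern tabulation; the content is the same.
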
 
    \begin{theorem}  { \bf  [Generalized Hölder Inequality] } \\
      Let \ \ r, s, t $\in$  $\mathbbm{R}$ \ \ and \ \ $ 0 \neq$ $r \cdot s \cdot t$  \  
                        \  and \ \ $\frac{1}{t}$ = $\frac{1}{r} \ + \ \frac{1}{s}$ . \  \
              Then we have  \ \ for every  \ \ n $\in$  $ \mathbbm{N}$  \\   
              for all vectors  \ \  $ \vec{v} := (v_1, \ \ldots , \ v_n)  \: $ \ and \
                $ \vec{x} := (x_1, \ \ldots , \ x_n)  \  \in  \mathbbm{C}^{n} $ \ 
       $ ( with  \ \  \vec{v} \cdot \vec{x} $  \ \  denotes  multiplication by components $) $       
                $$ \ t < r , s  \quad \Longrightarrow  \quad     
                                \| \vec{v} \cdot \vec{x} \|_t    \leq 
                              \| \vec{v}  \|_r  \cdot   \| \vec{x} \|_s \ , $$    
                      $$ \  t > r , s  \quad \Longrightarrow  \quad     
                               \| \vec{v} \cdot \vec{x} \|_t    \geq 
                              \| \vec{v}  \|_r  \cdot  \| \vec{x} \|_s  \  . $$                                           
   \end{theorem}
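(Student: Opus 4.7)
Set $\mathbf{D}:=\diag(v_1,\dots,v_n)$; then $\vec{v}\cdot\vec{x}=\mathbf{D}\vec{x}$, so the generalized H\"older inequality becomes a statement about the operator weight $\|\mathbf{D}\|_{s,t}$ computed in Theorem~1. The hypothesis $1/t=1/r+1/s$ rearranges to $r=st/(s-t)$, precisely the exponent appearing in case $(\mathbb{C})$.

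For the upper direction $t<r,s$, a short sign check shows that $1/t=1/r+1/s$ together with $t<r,s$ forces $(s,t)$ into one of the three sub-configurations of case $(\mathbb{C})$ of Theorem~1, namely $0<t<s$, $t<s<0$, or $t<0<s$. In each of these, Theorem~1 gives $\|\mathbf{D}\|_{s,t}=\|\vec{v}\|_{st/(s-t)}=\|\vec{v}\|_r$, and the desired estimate
\[
\|\vec{v}\cdot\vec{x}\|_t=\|\mathbf{D}\vec{x}\|_t\leq\|\mathbf{D}\|_{s,t}\cdot\|\vec{x}\|_s=\|\vec{v}\|_r\cdot\|\vec{x}\|_s
\]
is then the very definition of the operator weight. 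The degenerate sub-configuration $t<s<0$ with $\prod_i v_i=0$ falls under case $(\mathbb{B})$ rather than $(\mathbb{C})$, but then both $\|\mathbf{D}\|_{s,t}=\|\vec{v}\|_t$ and $\|\vec{v}\cdot\vec{x}\|_t$ vanish, so the estimate is trivial.

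For the lower direction $t>r,s$, assume first $\prod_i v_i\neq 0$ and view $\mathbf{D}^{-1}=\diag(1/v_1,\dots,1/v_n)$ as a linear map $(\mathbbm{C}^n,\|\cdot\|_t)\to(\mathbbm{C}^n,\|\cdot\|_s)$; in all admissible sign patterns of Case~2 one has $s<t$, so we land once more in case $(\mathbb{C})$ of Theorem~1. The theorem gives $\|\mathbf{D}^{-1}\|_{t,s}=\|\vec{v}^{-1}\|_{ts/(t-s)}=\|\vec{v}^{-1}\|_{-r}$, and a one-line computation of $\big(\sum|v_i|^{\pm r}\big)^{\pm 1/r}$ yields the reciprocal identity $\|\vec{v}^{-1}\|_{-r}\cdot\|\vec{v}\|_r=1$. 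Writing $\vec{y}:=\mathbf{D}\vec{x}$, the upper-direction estimate applied to $\mathbf{D}^{-1}$ becomes $\|\vec{x}\|_s=\|\mathbf{D}^{-1}\vec{y}\|_s\leq\|\vec{y}\|_t/\|\vec{v}\|_r$, which rearranges to $\|\vec{v}\cdot\vec{x}\|_t\geq\|\vec{v}\|_r\|\vec{x}\|_s$. If instead some $v_i=0$, then either $r<0$ and $\|\vec{v}\|_r=0$ makes the right-hand side vanish, or we are in the sub-case $r>0,s<0,t>0$; the latter I would reduce to the non-degenerate estimate by restricting to the support $S:=\{i:v_i\neq 0\}$, using the monotonicity $\|\vec{x}_S\|_s\geq\|\vec{x}\|_s$ which holds because $s<0$.

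The main obstacle is not any individual estimate but the sign/case bookkeeping: one must verify that the hypothesis $1/t=1/r+1/s$ combined with the appropriate ordering between $t$ and $\{r,s\}$ always selects case $(\mathbb{C})$ of Theorem~1 so that the exponent $st/(s-t)$ can be identified with $r$, and then check that each degenerate configuration (zero entries in $\vec{v}$ or $\vec{x}$, mixed signs of $r,s,t$) collapses the inequality either to a trivially true statement or to one recovered by a support restriction.
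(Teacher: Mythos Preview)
Your proposal is correct. For the upper direction $t<r,s$ you follow the paper exactly (invoking case $(\mathbb{C})$ of Theorem~1 with $r=st/(s-t)$), and you are in fact slightly more explicit than the paper in disposing of the degenerate sub-configuration $t<s<0$ with $\prod_i v_i=0$.

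For the lower direction $t>r,s$ your route genuinely differs from the paper's. The paper first discards the trivial case $\|\vec v\|_r\cdot\|\vec x\|_s=0$ and then, in each of the three sign patterns, inverts whichever of $\vec v,\vec x$ carries the \emph{negative} exponent: writing $\widetilde{x}_i:=v_ix_i$ and $z_i:=1/x_i$ (when $s<0$) or $z_i:=1/v_i$ (when $r<0$), it reduces to the already-proven upper direction via $\frac{1}{r}=\frac{1}{t}+\frac{1}{-s}$ (resp.\ $\frac{1}{s}=\frac{1}{t}+\frac{1}{-r}$). Because the inverted vector is the one whose H\"older weight was assumed nonzero, all its entries are automatically nonzero and no further case analysis is needed. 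You instead always invert $\vec v$, applying Theorem~1 directly to $\mathbf{D}^{-1}:(\mathbbm{C}^n,\|\cdot\|_t)\to(\mathbbm{C}^n,\|\cdot\|_s)$ together with the reciprocal identity $\|\vec v^{-1}\|_{-r}\,\|\vec v\|_r=1$. This keeps the operator viewpoint throughout and is arguably more conceptual, but the price is the extra support-restriction argument in the sub-case $r>0,\,s<0,\,t>0$ with some $v_i=0$; the paper's choice of which vector to invert sidesteps this entirely. Both arguments are short and valid.
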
 
  More explicitely  we have the following corollary. 
    \begin{corollary}  { \bf  [Generalized Hölder Inequality] } \\
      Let \quad    r, s, t $\in$  $\mathbbm{R}$ \ \ such that  \ \  $ 0 \neq$ $r \cdot s \cdot t$  \ \ 
                          and \ \  $\frac{1}{t}$ = $\frac{1}{r} \ + \ \frac{1}{s}$ . \  \
              Then for every   \ n $\in$  $ \mathbbm{N}$  \ and  
              for all numbers  \ \ $ v_1, \ \ldots , \ v_n , \
                              x_1, \ \ldots , \ x_n   \  \in  \mathbbm{C} $  
                              \ \   with   \ \ $\prod_{i=1}^{n} \ v_i \cdot  x_i$  $\neq 0 $ \ \ we have   
                 $$  \ t < r , s  \quad \Longrightarrow  \quad     
                               \sqrt[t]{\sum_{i=1}^{n}|v_i\cdot x_i|^{t}} \leq 
                             \sqrt[r]{\sum_{i=1}^{n}|v_i|^{r}} \cdot  \sqrt[s]{\sum_{i=1}^{n}|x_i|^{s}} \ , $$  
                  $$  \  t > r , s  \quad \Longrightarrow  \quad     
                               \sqrt[t]{\sum_{i=1}^{n}|v_i\cdot x_i|^{t}} \geq 
                             \sqrt[r]{\sum_{i=1}^{n}|v_i|^{r}} \cdot  \sqrt[s]{\sum_{i=1}^{n}|x_i|^{s}} \ . $$  
              \end{corollary}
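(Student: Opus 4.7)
The plan is to derive Corollary 2 as a direct specialisation of the immediately preceding Theorem 2. The content of the corollary is essentially a matter of unpacking the definition of the H\"older weights under the stronger hypothesis $\prod_{i=1}^{n} v_i x_i \neq 0$, and no further analysis is required once Theorem 2 is in hand.

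First I would note that the assumption $\prod_{i=1}^{n} v_i x_i \neq 0$ forces $v_i \neq 0$ and $x_i \neq 0$ for every $i$, so that the componentwise product $\vec{v}\cdot\vec{x}=(v_1 x_1,\ldots,v_n x_n)$ has no zero entry either. Consequently, none of the three vectors $\vec{v}$, $\vec{x}$, $\vec{v}\cdot\vec{x}$ ever falls into the ``$\prod_i y_i = 0$'' branch in the piecewise definition of $\|\cdot\|_p$ for $p<0$. This means that for each of the exponents $r$, $s$, $t$ (whether positive or negative) the H\"older weight is given by the uniform formula $\|\vec{y}\|_p = \sqrt[p]{\sum_{i=1}^{n} |y_i|^p}$.

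Substituting these explicit formulas into the two conclusions of Theorem 2 yields exactly the two displayed inequalities of the corollary: the ``$\leq$'' direction corresponds to $t < r,s$ and the ``$\geq$'' direction to $t > r,s$, and in both cases the inequality is inherited verbatim. Since the hypothesis $rst \neq 0$ and $1/t = 1/r + 1/s$ is identical in both statements, nothing further need be checked.

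There is no genuine obstacle at this step; the corollary is a translation of Theorem 2 into elementary notation, and the product hypothesis is precisely what is needed to remove the case distinction of Definition 3. The real work sits in Theorem 2, whose proof presumably proceeds by writing $\vec{v}\cdot\vec{x} = \mathbf{D}\vec{x}$ with $\mathbf{D}=\diag(v_1,\ldots,v_n)$ and identifying the operator weight $\|\mathbf{D}\|_{s,t}$ via the cases of Theorem 1, using the relation $r = st/(s-t)$ extracted from $1/t = 1/r + 1/s$; but for the corollary itself, only this routine unpacking is required.
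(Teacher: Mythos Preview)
Your proposal is correct and matches the paper's own approach: the paper simply states that Corollary~2 ``follows directly from Theorem~2,'' and your argument is precisely the unpacking needed to justify that line, observing that the hypothesis $\prod_i v_i x_i \neq 0$ guarantees the H\"older weights reduce to the explicit root-of-sum formulas in all three exponents. There is nothing to add.
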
  
     \begin{remark}  \rm          
     \quad If \ \ $\prod_{i=1}^{n} \ v_i \cdot  x_i$ = 0 \ \
        the inequality remains true provided the roots for negative exponents are defined.
    \end{remark}  
       \section{ Proof of Theorem 1 }     
        First we handle  the two easy cases.   \\
  CASE   $(\mathbb{A})$.  \ 
    Let   \ \   $s < 0 < t$ \ and  \ $ \vec{v} \ \neq \ \vec{0}$. \\
  Because  ${\bf D}$ \ is  not   the   0-matrix, there  is  a  \  
                 $ j \in \{ 1, \ldots , n \}  $ \  with  \  $ \ v_j \neq 0$.  \
  Take \   for every \ \ $k$ $\in \mathbbm{N}\backslash\{1\}$  \ the vector \ 
  $\vec{a}_k$ := ($ a_{k,1}, \:  \ldots , \: a_{k,n} $) \  with \  $a_{k,j}$ := $k$ \  and  \   
   for all  \ $ i   \in$ $\{1,2, \ldots , n \}\backslash \{j\}$  \  let \ 
  $a_{k,i}$ :=  $ \sqrt[s]{\frac{1-k^{s}}{n-1}}$ .  \ 
   We have   for every \  $k$ $\in \mathbbm{N}\backslash\{1\}$:   \quad $\|\vec{a}_k\|_s$ = 1  \  
  and \  $\|{\bf D}(\vec{a}_k)\|_t$ $\geq$ $|k \cdot v_j|$,   \  and    because of   \  
  $ k \rightarrow \infty$  \  the right hand side  goes to  infinity, hence  \  $\|{\bf D}\|_{s,t}$ = $\infty$.
                        \\ 
  
  CASE   $(\mathbb{B})$.  \ 
    Let \   \ $ \vec{v}  =  \vec{0}$, \ \ or \ \ $ t < 0 $\  and \ $ \prod_{i=1}^{n} v_i =  0 $,
   \ \ or \ \  $ s = \infty $.  \\
   If \    {\bf D} \  is \ the \  0-matrix \  we have  \ for all \ $s,t$ : \ $\|{\bf D}\|_{s,t}$  = 0 . \   
   If \  $( t < 0   \wedge  \prod_{i=1}^{n} v_i =  0 )$ \ \ one has at least  one  
                                                          \ \ $ j  \in \{1, \ldots , n \}$ \ \ with \ 
   \ $  v_j = 0$. \
  Then \ for $ \  \ \vec{x} \in$  $\mathbbm{C}^{n}$  \  
  we have \ \  $v_j \: x_j = 0$, \   hence \    $\|{\bf D}(\vec{x})\|_t$ = 0,
                                \ hence \   $\|{\bf D}\|_{s,t}$ = 0 =  $\|\vec{v}\|_{t}$ .  \\
     In the case of  \ $s = \infty $ \
 take \   $\vec{e} := (1,1, \ldots , 1 )$, \  then we have \  
                 $ \|\vec{e} \: \|_{\infty}$ = 1.     \   If \  $ t \in   \mathbbm{R} $
  \  we get \   $ \|{\bf D} \vec{e} \: \|_{t}  =  \left[   \sum_{i=1}^{n} |v_i|^{t} \right]^{\frac{1}{t}}$.  \ 
   If \  $ t = -\infty $ \  we get \  
    $  \|{\bf D} \vec{e} \: \|_{-\infty} =  \min \{ \: |v_i|\: \   | \ i \in \{1,2, \ldots , n\}  \ \}$.    \
     Hence  in  CASE $(\mathbb{B})$ we  always have   
                          \quad  $\|{\bf D}\|_{s,t}$  =   $\|\vec{v}\|_{t}$ .   \\    
    \\      The following two cases  are more complicated and  they  need  more attention. 
     They will be treated  together, because the proofs are similar.   \\ 
   CASE   $(\mathbb{C})$  \ and \ CASE    $(\mathbb{D})$. \quad    Let either  \ \ 
         $   ( -\infty < t < s < 0 \ \wedge \ \prod_{i=1}^{n} v_i \neq 0 ),  \ \  {\rm or } \ \  
         { ( 0 < t < s < \infty ) }, 
                  \\ {\rm or} \ \ ( -\infty < t < 0 < s < \infty ),   \ \ {\rm or} \ \ 
        \ ( \  s \leq t < 0 \ \wedge \ \prod_{i=1}^{n} v_i \neq 0 ), \ \  {\rm or}  \ \  ( 0 < s \leq t )$.   \\  
   The theorem is trivial if {\bf D} is the 0-matrix, because  then it clearly follows that \
    0  =   $ \|\vec{v}\|_{\frac{s \cdot t}{s-t}} $ = $ \|\vec{v}\|_{\infty} $ =  $ \|{\bf D}\|$. 
       Hence, we assume \   $ \vec{v}  \neq \vec{0} $.                        Let    \
 $\sf M$ $\in$ \{ $1,2, \ \ldots , \ n$ \} \    such that   \  
   $ |v_{\sf M}| $ =  { $ \max \{|v_1|,  \ldots , |v_n| $\}, } hence $ |v_{\sf M}| $ $>$ 0.   \ \
                       Now for the proof we will distinguish four different cases.   \\ 
      {\large \bf Case a)} \  $  0 <   s, t  < \infty $.    \\  
      {\large \bf Case b)} \    $-\infty < s , t < 0 \   { \rm  and } \   \prod_{i=1}^{n} v_i \neq 0 $.  \\
      {\large \bf Case c)} \ $-\infty < t < 0 < s <   \infty $.   \\ 
      {\large \bf Case d)} \  $  ( \ -\infty = s \leq t < 0 \ \ {\rm  and}  \ \  \prod_{i=1}^{n} v_i \neq 0 \ ) \ \ 
                          {\rm  or}  \ \ ( \  0 < s \leq t = \infty \ ) $.  \\ 
      We will prove  the  cases {\bf a,b,c } for \ $n = 2$ \   \ 
                          and then inductively  for all \ $ n \in$  $ \mathbbm{N}\backslash\{1\}$.     \\     \\                   
       {\large \bf Case a)} \quad Let  \   0 $<  s, t < \infty $.   \\  
          Let  \ $ n = 2$.    \ \
        We have the $ 2\times2$ matrix \quad  {\bf D} \ := \  $ \diag (v_1,v_2)$.              
     Without loss of generality let \   $v_{\sf M}$ = $ v_2$   ( $\neq$ 0 ).  \ 
                                  With \    $ b := {v_1}/{v_2}$  \  
     we have  \ \ $|b|$ $\leq$ 1, \   and \   {\bf D}   \   = \   
       $ v_2$ $\cdot$   $  \left(   \begin{array}[ ]{cc}
                     b  &    0  \\
                    0   &    1   	
            \end{array} \right) $ \ \ =: \ $ v_2$ $\cdot$ $\widetilde{{\bf D}}$.  \\
       We have \   $\left\|{\bf D}\right\|_{s,t}$ = $|v_2|$ $\cdot$ $\|\widetilde{{\bf D}}\|_{s,t}$ 
        =  $|v_2|$ $\cdot \sup \ \{ \: \|\widetilde{{\bf D}}(\vec{x})\|_t \  | \  \vec{x} \in                                                                           \mathbbm{C}^{2} \ { \rm and } \ \|\vec{x}\|_s = 1$ \}.   \
        With  \ \quad  $ \vec{x}  := (x_1,x_2) $ \quad  
  we define a map  \ \   $G$ : [0,1] $\rightarrow$  $\mathbbm{R}^{+}  \cup  \{ 0 \}$, but at first we will
   consider $G^{\:t}$ because it is easier 
        ( $G$  and $G^{\:t}$ have extremums at the same values ).  \  
      Define \\      $G\:^{t}(y)$ :=  $ (\|\widetilde{{\bf D}}(\vec{x})\|_t)^{t}$ =
                                         $  y^{t}\cdot |b|^{t} + \left[\sqrt[s]{1-y^{s}} \right]^{t}$ \
   \  for \ $ y := |x_1|, \ \|(x_1,x_2)\|_s = 1 $, \ hence \ \ $ y \  \in$ [0,1].  \\
      First assume that \ $ s \neq t  $.  \
    Elemantary analysis  shows that \[  (G\:^{t})'(y_E) = 0   \    \Leftrightarrow      \
                     y_E =    \sqrt[s]{\frac{1}{1+|b|^{\frac{s \cdot t}{t-s}}}} \ . \]
   Instead  of computing    $(G\:^{t})''(y_E)$  we check the boundaries of the domain of $ G $, hence                
  the maximum  \ \  M$_{s,t} :=  \max\:\{\|\widetilde{{\bf D}}(\vec{x})\|_t \ \: | \ \:  \vec{x} \in                                                          \mathbbm{C}^{2} \  \wedge \ \|\vec{x}\|_s = 1 \} \  = \
            \max\:\{ G(y) \ | \ y \in [0,1] $ \}   \
               is contained in the set \  \quad    $\{ \: G(y_E)\:  ,\:  G(0)\: , \: G(1) \: \} \ = 
   \ \{\:  [1+|b|^{\frac{s \cdot t}{s-t}}]^{\frac{s-t}{s \cdot t}} \: , \: 1 \: , \: |b| \: \}$.    \ \
     To determine   M$_{s,t}$ let us now consider the following  three subcases.  \\ 
    Subcase 1: \   $ s < t $  \ $\Longrightarrow$ \  M$_{s < t}$  = 1  \ \ \ and \  \ \ 
                        $\left\|{\bf D}\right\|_{s,t}$ = $|v_2|$ $\cdot$ M$_{s < t}$  = $  |v_2| $.  
    \\                            
    Subcase 2: \  $ s > t $ \  $\Longrightarrow$ \  M$_{s > t}$ = 
                                       $ [1+|b|^{\frac{s \cdot t}{s-t}}]^{\frac{s-t}{s \cdot t}}$  \
     and      $\left\|{\bf D}\right\|_{s,t}$ =   $|v_2|$ $\cdot$ M$_{s > t}$ 
     = $ [\: |v_2|^{\frac{s \cdot t}{s-t}} + |v_1|^{\frac{s \cdot t}{s-t}} \: ]^{\frac{s-t}{s \cdot t}}$. \\                 Subcase 3: \ $ s = t $ \  $\Longrightarrow$ \  By doing similar calculations as just now 
     (in the case $ s \neq t $),
    we get \  
       M$_{s=t}$ \ = \ $G(0)$ \ = \ 1, \ \ hence  \ \  $\left\|{\bf D}\right\|_{s,s}$ =  $ |v_2|$, \ 
         and the theorem  has been proved for $n = 2$.    
   \begin{remark}    \rm     We have  a continuous 
         behaviour of \ $\left\|{\bf D}\right\|_{s,t}$ \  if  \   $ s = t $, \ \  that means          
      \[  \lim_{\breve{t} \nearrow s}      ( \left\|{\bf D}\right\|_{s,\breve{t}}  ) \ = \ 
        \lim_{ \breve{t}  \nearrow s}    ( \: [ \: |v_2|^{\frac{s \cdot \breve{t}}{s-\breve{t}}} + 
            |v_1|^{\frac{s \cdot \breve{t}}{s-\breve{t}}} \: ]^{\frac{s-\breve{t}}{s \cdot \breve{t}}} ) \ = \  
             \|\vec{v}\|_{\infty} \ = \  
            |v_2| \ = \  \left\|{\bf D}\right\|_{s,s} \ = \   
            \lim_{ \breve{s}  \searrow t}     (  \left\|{\bf D}\right\|_{\breve{s},t} )  \ .   \]  
     \end{remark}             
   Proof \  for \ $ n \geq $ 3.    \\  
    Assume that the theorem holds for $n - 1 $.    \     
    Let  \ $ {\sf m} \in $ $\{1 , \ldots , n-1\}$ \ with 
                          \ $|v_{\sf m}| := \max \{ |v_1|, \: \ldots , \: |v_{n-1}| $\}, \ \ 
         let  $\vec{x} := ( x_1, \:  \ldots , \: x_{n-1}, x_n )   \in      \mathbbm{C}^{n}$. 
         We distinguish two subcases.  \\
   Subcase 1: \ \ $ s < t $ \  or  \ $ s = t$. \\        
     We have just proved the theorem for $n = 2$,  \  that means that  \
      for arbitrary \ \ $ y_1, y_2, \  w_1, w_2 \ \in \: \mathbbm{C}$  \ we have     \ \
      $  \sqrt[t]{ |w_1 \: y_1|^{t} +  |w_2 \: y_2|^{t} }  \
    \leq \  \max\{|w_1|, |w_2|\} \cdot  \sqrt[s]{|y_1|^{s}+|y_2|^{s}} $ .    \ \
     By the assumption,  we have  \ \ 
   $\sqrt[t]{\sum_{i=1}^{n-1}|v_ix_i|^{t} }$ $\leq$ $|v_{\sf m}| \cdot \sqrt[s]{\sum_{i=1}^{n-1} |x_i|^{s} }$ . \\
         By using the assumption and the theorem for $ n = 2 $,  it follows that 
   \begin{eqnarray*}    
  \|{\bf D}(\vec{x})\|_t  & =  &  \sqrt[t]{\sum_{i=1}^{n-1}|v_ix_i|^{t} + |v_nx_n|^{t}}   \\
        &   \leq \   &
     \sqrt[t]{ |v_{\sf m}|^{t} \cdot  \left[ \sqrt[s]{\sum_{i=1}^{n-1}|x_i|^s}  \right]^{t}  + |v_nx_n|^{t} }  \\   
       &  \leq  &   
    \max \{ |v_{\sf m}| , |v_n| \} \cdot \sqrt[s]{\sum_{i=1}^{n-1}|x_i|^{s}  + |x_n|^{s } }  
     \ \   =  \ \  |v_{\sf M}| \cdot  \|\vec{x}\|_s \:  .  
    \end{eqnarray*}   
    Hence \ \ $\|{\bf D}\|_{s,t}$ $\leq$ $|v_{\sf M}|$.  \\
    The vector \  $\vec{e_{\sf M}} := ( 0, \ldots , 0 , 1, 0, \ldots  , 0 )$  \   shows that 
       \   $\|{\bf D}(\vec{e_{\sf M}})\|_t$  = $|v_{\sf M}|$ $\cdot$ 1, \   
    hence \  $\|{\bf D}\|_{s,t}$ = $|v_{\sf M}|$.   \\
    Subcase 2: \ \ $ s > t $. \\
    Let \ \ $\widetilde{w}$ := $ [\: \sum_{i=1}^{n-1}|v_i|^{\frac{s \cdot t}{s-t}} ]^{\frac{s-t}{s \cdot t}}.$ \
    Because the theorem holds for $ n = 2$,  
      \ we have for arbitrary \  $ y_1, y_2, \  w_1, w_2 \in \: \mathbbm{C}$:  \ 
      $  \sqrt[t]{ |w_1 \: y_1|^{t} +  |w_2 \: y_2|^{t} }$  \
    $\leq$ \   [$ |w_1|^{ \: \frac{s t}{s-t}} + |w_2|^{\frac{s t}{s-t}} ]^{\frac{s-t}{st}}$ $\cdot$ 
    $  \sqrt[s]{|y_1|^{s}+|y_2|^{s}} $.    \\
      Because we assume the theorem for $ n-1 $, \ we have :  \ \ 
   $\sqrt[t]{\sum_{i=1}^{n-1}|v_ix_i|^{t} }$ $\leq$ $ \widetilde{w} \cdot \sqrt[s]{\sum_{i=1}^{n-1} |x_i|^{s} }$ . \\
      By using this and  the theorem for $ n =  2$,   we have  
     \begin{eqnarray*}     
         \|{\bf D}(\vec{x})\|_t   & \   =  \ & 
     \sqrt[t]{    \sum_{i=1}^{n-1}|v_i x_i|^{t}   + |v_nx_n|^{t} }   \\
       &  \   \leq  \  & 
    \sqrt[t]{ \widetilde{w}^{ \: t} \cdot  \left[ \sqrt[s]{\sum_{i=1}^{n-1}|x_i|^s}  \right]^{t}  + |v_nx_n|^{t} } \\         
       &  \   \leq   \   & 
    [ \widetilde{w}^{ \: \frac{s t}{s-t}} + |v_n|^{\frac{s t}{s-t}} ]^{\frac{s-t}{st}} \cdot \|\vec{x}\|_s 
    \ \  =   \ \  \|\vec{v}\|_{\frac{s \cdot t}{s-t}} \cdot \|\vec{x}\|_s  \ .  
    \end{eqnarray*}       
       Hence  $\|{\bf D}\|_{s,t}$     $\leq$   $\|\vec{v}\|_{\frac{s \cdot t}{s-t}}$  . \\ 
       Define  \ for all \ $ i = 1,2, \ldots n \quad  r_i := \sqrt[s-t]{|v_i|^{t}}$, \ 
        and  take the vector \  
  $\vec{z}$  := $ \frac{1}{\sqrt[s]{\sum_{i=1}^{n} |v_i|^{\frac{s \cdot t}{s-t}}}}\cdot ( r_1 , \ldots ,  r_n )$. \\
   One has \   $\|\vec{z}\|_s$  = 1  \   and  \   
   $\|{\bf D}(\vec{z})\|_t$ = $\|\vec{v}\|_{\frac{s \cdot t}{s-t}}$ , \ \ 
     that means  the theorem  is satisfied both in  subcase 1 and in subcase 2, and the proof is finished 
    if  \   0 $<  s, t  < \infty $. 
                                                                             \\    \\
   {\large \bf Case b)} \quad Let \ $-\infty  \  < \ s, t \ <$ \ 0  \ \ and \ \ $ \prod_{i=1}^{n} v_i \neq 0 $. \\                                                                           
    Let \  \    {\bf D} \  = \             $  \left(   \begin{array}[ ]{cc}
                                            v_1  &    0  \\
                                             0   &   v_2   	
                      \end{array}              \right)   $  \   = \   
       $ v_2$ $\cdot$   $  \left(   \begin{array}[ ]{cc}
                     b  &    0  \\
                    0   &    1   	
            \end{array} \right) $ \ \ =: \ $ v_2$ $\cdot$ $\widetilde{{\bf D}}$  , \ with   \ 
     $b$ := $v_1/v_2$, as above,  \ and we have \quad   
      $|v_2|$ $\geq$  $|v_1|$ $>$ 0  \   and \   1 $\geq$ $|b|$ $>$ 0.   \ \     
   One has \  $\left\|{\bf D}\right\|_{s,t}$ =  
        $|v_2|$ $\cdot \sup \{\|\widetilde{{\bf D}}(\vec{x})\|_t \ \ | \ \ \vec{x} \in                                           \mathbbm{C}^{2}  \  \ \wedge \ \|\vec{x}\|_s = 1$\}, \  as above.  \\
          But the domain of the map \ \ 
   $ G\:^{t}(y) := (\|\widetilde{{\bf D}}(\vec{x})\|_t)^{t}$  = 
      $ y^{t}\cdot |b|^{t} + \left[\sqrt[s]{1-y^{s}} \right]^{t}$  \ \ 
  has changed.  \\  With  \ \ $ \vec{x} =: (x_1,x_2) $ \ and \ $ \|\vec{x}\|_s = 1 $, \
   $ y := |x_1| $, it has to be \  $ y >$ 1, 
    ( because  \ $ s $ \ is negative ).  \\      
     As above, we have  \quad    $(G\:^{t})'(y_E) = 0   \    \Leftrightarrow      \
    y_E =  \sqrt[s]{\frac{1}{1+|b|^{\frac{s \cdot t}{t-s}}}}$ \ = \
     $[{1+|b|^{\frac{s \cdot t}{t-s}}}]^{-\frac{1}{s}}$ ,  \\     
  and the maximum  \ \ M$_{s,t}$ :=  $ \sup\:\{\|\widetilde{{\bf D}}(\vec{x})\|_t \ \: | \ \:  \vec{x} \in                                                          \mathbbm{C}^{2} \  \wedge \ \|\vec{x}\|_s = 1$ \} \  = \
          $  \sup\:\{ G(y) \ | \ y > 1  \} $  \\         
     is contained in the set \ 
       \quad    $\{ \: G(y_E) , \:   \lim_{ y \to 1} G(y) \: , 
        \lim_{ y \to \infty}   G(y) $  \} \ = 
   \ $\{\:  [1+|b|^{\frac{s \cdot t}{s-t}}]^{\frac{s-t}{s \cdot t}} \: , \: |b| \: , \: 1 \: \}$.  \\ 
   Again we consider three subcases. \\
    Subcase 1: \ \ $ s < t $ \  $\Rightarrow$ \  M$_{s < t}$  = 1  \ \  and   \ \ 
                               $\left\|{\bf D}\right\|(s,t)$ = $|v_2|$ $\cdot$ M$_{s < t}$  = $  |v_2| $.   \\
   Subcase 2: \ \ $ s > t $ \ \ $\Rightarrow$ \ M$_{s > t}$ = 
                               $ [1+|b|^{\frac{s \cdot t}{s-t}}]^{\frac{s-t}{s \cdot t}}$   \   
                               and \ $\left\|{\bf D}\right\|(s,t)$ =   $|v_2|$ $\cdot$ M$_{s > t}$ 
     = $ [\: |v_2|^{\frac{s \cdot t}{s-t}} + |v_1|^{\frac{s \cdot t}{s-t}} \: ]^{\frac{s-t}{s \cdot t}}$. \\  
                   Subcase 3: \ $ s = t $  \ \ $\Rightarrow$ \  We get \  
       M$_{s=t}$ \ = \  
                       $ \lim_{ y \to \infty}  G(y)$ \ = \ 1,
  \  hence  \  $\left\|{\bf D}\right\|(s,t)$ =  $ |v_2|$, and the theorem   has been proved for $ n = 2$. \ 
    Now we finish  {\large \bf Case b}  in a similar way to  {\large \bf Case a}.   \\
       Subcase 1: \ \ $ s < t $ \ or  \ $ s = t $. \\  
    We have proved  the theorem for $ n = 2 $.    Because of  \ $ t < 0 $, \ we have for arbitrary \\
     $ y_1, y_2, \  w_1, w_2  \in   \mathbbm{C} $:    \
       $|w_1 \: y_1|^{t} +  |w_2 \: y_2|^{t} 
    \geq [max\{|w_1|, |w_2|\}]^{t} \cdot \left[ \sqrt[s]{|y_1|^{s}+|y_2|^{s}} \right]^{t}$.
     \  Let  \\  
                             {\sf m} $\in$ $\{1 , \ldots , n-1\}$ \ with
                    \ $|v_{\sf m}|$ := max \{$ |v_1|, \: \ldots , \: |v_{n-1}| $ \}, \quad 
         let  $\vec{x} := ( x_1, \:  \ldots , \: x_{n-1}, x_n )   \in      \mathbbm{C}^{n}$.   \\
     We assume the theorem for $ n - 1 $, hence  we have :  \ \ \
   $\sqrt[t]{\sum_{i=1}^{n-1}|v_ix_i|^{t} }$ $\leq$ $|v_{\sf m}| \cdot \sqrt[s]{\sum_{i=1}^{n-1} |x_i|^{s} }$ . \\
   Because of  \ \ $t <$ 0 , \ this is equivalent to \ 
    $\sum_{i=1}^{n-1}|v_ix_i|^{t}$ $\geq$ $|v_{\sf m}|^{t} \cdot
                                   \left[ \sqrt[s]{\sum_{i=1}^{n-1} |x_i|^{s}} \right]^{t}$. 
   \begin{eqnarray*}        { \rm   Thus \ it \ follows }  \  \quad  
              \left[\|{\bf D}(\vec{x})\|_t\right]^{t}  &  =   &  \sum_{i=1}^{n-1}|v_ix_i|^{t} + |v_nx_n|^{t}  \\
                &  \geq    &
               |v_{\sf m}|^{t} \cdot  \left[ \sqrt[s]{\sum_{i=1}^{n-1}|x_i|^s}  \right]^{t}  + |v_nx_n|^{t}  \\  
             &   \geq   &   
              \left[ \: \max\{ |v_{\sf m}| , |v_n| \} \: \right]^{t} \cdot 
              \left[\sqrt[s]{\sum_{i=1}^{n-1}|x_i|^{s}  + |x_n|^{s }}\right]^{t} 
             = |v_{\sf M}|^{t} \cdot  \|\vec{x}\|_s  ^{t} \ .   
      \end{eqnarray*}   
    Because of \ $ t <$ 0, this is equivalent to \ \  $\|{\bf D}(\vec{x})\|_t$  
                                             $\leq$    $|v_{\sf M}| \cdot  \|\vec{x}\|_s$ .  \
    Hence \ \ $\|{\bf D}\|_{s,t}$ $\leq$ $|v_{\sf M}|$.    \\ 
    To check equality, take  for all sufficient large \ $ k  \in$  $ \mathbbm{N}$  \ \ 
    ( i.e. such that  \ \ \ $ 2 - (1-\frac{1}{k})^{s} \  > \ 0$ ) \\
    the vector \ \
     $\vec{a_k}$ := ($ a_{k,1}, \:  \ldots , \: a_{k,n} $) \ \    with \ \  
     $a_{k, {\sf M}}$ := $ \sqrt[s]{2 - (1-\frac{1}{k})^{s}}$ ,  \ and  \   
   for  every  \\   $  i \in$ $ \{1,2, \ldots , n \}\backslash \{\sf M\} $ \ take \
  $a_{k,i}$  :=  $q_k$  :=  $ \sqrt[s]{\frac{( 1-\frac{1}{k} )^{s} \ -1}{n-1}}$ .  \  
   We have  for all such \ $ k $ :  \  $\|\vec{a_k}\|_s$ = 1, \ \   and  
   because of \ \ $ s, t <$ 0, \ \  we  get \  \     
    $  \lim_{ k \to \infty}$ $(q_k)$ = $\sqrt[s]{0}$ = +$\infty$, \quad
    hence \ \     $ \lim_{ k \to \infty}$ $((q_k)^{t})$ = 0,  
    \begin{eqnarray*}  
     {\rm  and } \ \quad  
      \|{\bf D}(\vec{a_k})\|_t   & = &      \sqrt[t]{ |v_{\sf M}|^{t} 
         \cdot   \left[ \sqrt[s]{2 - (1-\frac{1}{k})^{s}} \: \right] ^{t} + 
                             \sum_{i=1, \ldots , n \wedge i \neq {\sf M}} |v_i|^t \cdot (q_k)^{t}  }   \\ 
                                 &  = & \ \   |v_{\sf M}| \cdot  
      \sqrt[t]{  \left[ \sqrt[s]{2 - (1-\frac{1}{k})^{s}} \: \right]  ^{t} + \ \ (q_k)^{t} \  \cdot 
                             \sum_{i=1, \ldots , n \wedge i \neq {\sf M}} |\frac{v_i}{v_{\sf M}}| ^{t} } \quad ,
      \end{eqnarray*}                         
                                   hence \ \ $\ \lim_{  k \to \infty}$  $\|{\bf D}(\vec{a_k})\|_t$ 
                                                 \ = \ $  |v_{\sf M}|$.  \ \  \quad
       Thus \ \ $\|{\bf D}\|_{s,t}$ = $|v_{\sf M}|$.    \\  
    Subcase 2: \ \  $ s > t $. \\ 
        Let  \quad $\vec{x} := ( x_1, \:  \ldots , \: x_{n-1}, x_n )   \in      \mathbbm{C}^{n}$.  \
    We have proved the theorem for $n = 2$, that means    \\ 
        $|y_1 w_1|^{t} +  |y_2 w_2|^{t}$ $\geq$   
     $ [\: |y_1|^{\frac{s \cdot t}{s-t}} + |y_2|^{\frac{s \cdot t}{s-t}} \: ]^{\frac{s-t}{s}}$  $\cdot$ 
    $\left[ \sqrt[s]{|w_1|^{s}+|w_2|^{s}} \right]^{t}$ \ \ for \ $ y_1, y_2, \  w_1, w_2  \in \mathbbm{C}$.  \\ 
     Assume  the theorem for $n - 1$, hence  (because of \ $ t < 0 $)    \\                              
     $\sum_{i=1}^{n-1}|v_ix_i|^{t}$ $\geq$  $ \left[ \sum_{i=1}^{n-1} |v_i|^{\frac{s \cdot t}{s-t} }                                   \right]^{\frac{s-t}{s}} $   $\cdot$   $\left[ \sqrt[s]{\sum_{i=1}^{n-1} |x_i|^{s}} \right]^{t}$. \ \              By doing similar estimations as three times before, we  get  \ \  
    $\left[\|{\bf D}(\vec{x})\|_t\right]^{t}$  = $\sum_{i=1}^{n-1}|v_ix_i|^{t} + |v_nx_n|^{t}$  \
      $\geq $  \
       $ \left[ \sum_{i=1}^{n-1} |v_i|^{\frac{s \cdot t}{s-t} } \right ]^{\frac{s-t}{s}} $
                  $  \cdot  \left[ \sqrt[s]{\sum_{i=1}^{n-1}|x_i|^s}  \right]^{t}  + |v_nx_n|^{t}$   \ 
      $ \geq $  \  
    $ \left[ \sum_{i=1}^{n-1} |v_i|^{\frac{s \cdot t}{s-t} } + |v_n|^{\frac{s \cdot t}{s-t}} \right ]^{\frac{s-t}{s}}                                                          \cdot 
    \left[\sqrt[s]{\sum_{i=1}^{n-1}|x_i|^{s}  + |x_n|^{s }}\right]^{t} $
    = $\left[ \|\vec{v}\|_{\frac{s \cdot t}{s-t}} \right]^{t} \cdot  \|\vec{x}\|_s  ^{t}$ .   \\
    Because of \ \ t $<$ 0, this is equivalent to \ \   $\|{\bf D}(\vec{x})\|_t$ 
                 $\leq$    $ \ \|\vec{v}\|_{\frac{s \cdot t}{s-t}}  \cdot \  \|\vec{x}\|_s$ .  \\
    Hence \ \ $\|{\bf D}\|_{s,t}$ $\leq$ $\|\vec{v}\|_{\frac{s \cdot t}{s-t}}$ .   
    To check equality , one  can use the same vector as above, i.e. ,
     define \quad  \ for \ \ $i = 1,2, \ldots n : \ \ r_i := \sqrt[s-t]{|v_i|^{t}}$  , \ \  and   \ \   
   $\vec{z}$  := $ \frac{1}{\sqrt[s]{\sum_{i=1}^{n} |v_i|^{\frac{s \cdot t}{s-t}}}}\cdot ( r_1 , \ \ldots , \ r_n )$.
      \\    \\   
      {\large\bf Case c)} \quad Let \ \ $ -\infty < t < 0 < s < \infty.  $  \\    
    The  proof   is  similar as the proofs before and we will  not  explain it in all  details.  \\
    In the case of \ \  $\prod_{i=1}^{n} v_i = 0$, in  \  CASE  $(\mathbb{B})$ \ we already have proved that 
      $ \|{\bf D}\|_{s,t} $ = 0.  
    Note that \quad $\frac{s \cdot t}{s-t}$  $<$ 0, \  \quad hence  \quad [ $\prod_{i=1}^{n} v_i = 0$ 
    \   $\Rightarrow$  \    $  \|\vec{v}\|_{\frac{s \cdot t}{s-t}}$ = 0 ] \quad  follows.  \quad  
     Now assume \ \  $\prod_{i=1}^{n} v_i \neq 0$.   \\
      Proof for \ $ n = 2 $.    \ \quad  As  in   {\large \bf Case a},
        we consider  the $ 2\times2$ matrix \quad  {\bf D} \ := \                  
                 $  \left(   \begin{array}[ ]{cc}
                                            v_1  &    0  \\
                                             0   &   v_2   	
                      \end{array}              \right)   $.   \\ 
     With   \  \ $v_{\sf M}$ = $ v_2$ \ \  and   \ \  $ b := {v_1}/{v_2}$  \  \
     we have  \quad \ \ {\bf D}   \  = \   
       $ v_2$ $\cdot$   $  \left(   \begin{array}[ ]{cc}
                     b  &    0  \\
                    0   &    1   	
            \end{array} \right) $ \ \ =: \ $ v_2$ $\cdot$ $\widetilde{{\bf D}}$.  \ \
       One has \ \ $\left\|{\bf D}\right\|(s,t)$ = $|v_2|$ $\cdot$ $\|\widetilde{{\bf D}}\|(s,t)$ 
        =  $|v_2| \cdot \sup \{\|\widetilde{{\bf D}}(\vec{x})\|_t \ \ | \ \ \vec{x} \in                                                                           \mathbbm{C}^{2} \  \wedge \ \|\vec{x}\|_s = 1 $ \}.   \ \
       Again we  consider the  map \ \ \
   $ G\:^{t}(y) := (\|\widetilde{{\bf D}}(\vec{x})\|_t)^{t}$ = 
                                         $  y^{t}\cdot |b|^{t} + \left[\sqrt[s]{1-y^{s}} \right]^{t}$,
   ( here for all $ y  $ in the open interval \ $ (0,1) $  ). \ \ 
      As in    {\large \bf Case a},  we have: \ \  $(G\:^{t})'(y_E) = 0   \    \Leftrightarrow      \
                     y_E =    \sqrt[s]{\frac{1}{1+|b|^{\frac{st}{t-s}}}}$ , \ \  which yields   a  minimum 
   for  the map \   $G\:^{t}$, \ \  but  a maximum  for the  map \ $ G $,  \ \  and we get                     
   the maximum  \ \  \   $\max\:\{\|\widetilde{{\bf D}}(\vec{x})\|_t \ \: | \ \:  \vec{x} \in                                                          \mathbbm{C}^{2} \ \ { \rm and } \ \ \|\vec{x}\|_s = 1  \} $ \  = \
              $\max\:\{ G(y) \ | \ y \in [0,1]  \} \ = \ G(y_E)  $.  \\
         As above,  we have   \  \      
           $  G(y_E)    =  \:  [1+|b|^{\frac{s \cdot t}{s-t}}]^{\frac{s-t}{s \cdot t}}  $, \ \  
              and \ \       $\left\|{\bf D}\right\|_{s,t}$ =   $|v_2|$ $\cdot$  \ $G(y_E)$ \ 
     = $ [\: |v_2|^{\frac{s \cdot t}{s-t}} + |v_1|^{\frac{s \cdot t}{s-t}} \: ]^{\frac{s-t}{s \cdot t}}$, \ 
     and     the theorem is proved for $ n = 2 $.    \\  
     Because of \  \ $ t < 0 $, \   \  we  have to  continue  as  in  {\large \bf Case b} , subcase 2.  \\  
      Let  \quad $\vec{x} := ( x_1, \:  \ldots , \: x_{n-1}, x_n )   \in      \mathbbm{C}^{n}$, and  let
        \ \ $ y_1, y_2, \  w_1, w_2  \in     \mathbbm{C}$.   \\ 
     We just have proved that  \ 
     $|y_1 w_1|^{t} +  |y_2 w_2|^{t}$ $\geq$   
     $ [\: |y_1|^{\frac{st}{s-t}} + |y_2|^{\frac{st}{s-t}} \: ]^{\frac{s-t}{s}}$  $\cdot$ 
                                     $\left[ \sqrt[s]{|w_1|^{s}+|w_2|^{s}} \right]^{t}$ holds.  \\ 
     Assuming  the theorem for $ n-1 $, we get                                  
     \  \   
    $\sum_{i=1}^{n-1}|v_ix_i|^{t}$ $\geq$  $ \left[ \sum_{i=1}^{n-1} |v_i|^{\frac{st}{s-t} } \right]^{\frac{s-t}{s}} $           $\cdot$        $\left[ \sqrt[s]{\sum_{i=1}^{n-1} |x_i|^{s}} \right]^{t}$. \\   
      Hence we compute  as four times  before    \\   
    $\left[\|{\bf D}(\vec{x})\|_t\right]^{t}$  = $\sum_{i=1}^{n-1}|v_ix_i|^{t} + |v_nx_n|^{t}$   \
     $ \geq $ 
     $ \left[ \sum_{i=1}^{n-1} |v_i|^{\frac{st}{s-t} } \right ]^{\frac{s-t}{s}} $
                  $  \cdot  \left[ \sqrt[s]{\sum_{i=1}^{n-1}|x_i|^s}  \right]^{t}  + |v_nx_n|^{t}$  \\ 
    $ \geq $  
    $    \left[ \sum_{i=1}^{n-1} |v_i|^{\frac{st}{s-t} } + |v_n|^{\frac{st}{s-t} } \right ]^{\frac{s-t}{s}}  
     \cdot   \left[\sqrt[s]{\sum_{i=1}^{n-1}|x_i|^{s}  + |x_n|^{s }}\right]^{t} $
    = $\left[ \|\vec{v}\|_{\frac{s \cdot t}{s-t}} \right]^{t} \cdot  \|\vec{x}\|_s  ^{t}$ .   \\
    Because of \ $ t <$ 0, this is equivalent to \  $\|{\bf D}(\vec{x})\|_t$ 
                 $\leq$    $ \ \|\vec{v}\|_{\frac{s \cdot t}{s-t}}  \cdot \  \|\vec{x}\|_s$ , \ hence
    \ $\|{\bf D}\|(s,t)$ $\leq$ $\|\vec{v}\|_{\frac{s \cdot t}{s-t}}$.   
     To check equality, one  can use the same vector as two times before, i.e. 
     define   \ \ for \  $ i = 1,2, \ldots n : \ \ r_i := \sqrt[s-t]{|v_i|^{t}}$  , \ \  and   \ \   
   $\vec{z}$  := $ \frac{1}{\sqrt[s]{\sum_{i=1}^{n} |v_i|^{\frac{s \cdot t}{s-t}}}}\cdot ( r_1 , \ \ldots , \ r_n )$.
     \\    \\ 
      {\large \bf Case d)} \quad Let \ \  $  \  -\infty = s \leq t < 0 \: \  {\rm and } \: \ 
      \prod_{i=1}^{n} v_i \neq 0 , \ \ \  { \rm or \ \ let } \ \   \  0 < s \leq t = \infty. \  $   \\    
      If  \quad  $ 0 <  s $ $\leq  t   =   \infty$, \quad    \    
    take \ \ $\vec{e_{\sf M}} := (0, \ldots , 0,1,0,  \ldots , 0 ) , \ \ $ hence $ \ \ 
     \|\vec{e_{\sf M}} \|_{s} = 1 $, \ \ 
   and \ \ \ $ \|{\bf D} (\vec{e_{\sf M}} )\|_{\infty}  =  |v_{\sf M}| $, \ \
    and   \ \  $\|{\bf D}\|$  =   $|v_{\sf M}| = \max \: \{ |v_1| , \ldots , |v_n| \}$  \quad  follows.  \\
       If  \quad \  $-\infty = s \leq t < 0 $ \  and \ $ \prod_{i=1}^{n} v_i \neq 0 $ \  \ 
                          \quad  one can use the vector  \quad   $\vec{e_k}$    \ 
            ( for all $ \: k \in \mathbbm{N}$ ) \ \ with \ \ $ e_{k, \sf M}$ := 1, \ and  \ \ 
         for all \ \: $ i \in \{1, \ldots , n \}\backslash \{ \sf M \} $    
        \  \ $ e_{k,i} := k $,  \ \  hence \  \   
       $\|\vec{e}_k\|_{-\infty} $ = 1, \ \ and \\  
                 $ \lim_{ k \to \infty}$  
                          ($\|{\bf D}(\vec{e}_k)\|_{t})$  =  $ |v_{\sf M}| =   \|\vec{v}\|_{\infty} $ ,  \ \
                              and all  four cases \ {\large \bf Case a} $-$ {\large \bf Case d} \
      are proved, hence CASE $(\mathbb{C})$  \ and \ CASE  $(\mathbb{D})$
         are confirmed.    \\    \\
      It remains to prove  one case of the theorem.  \\  CASE $(\mathbb{E})$.  \ \
    Let     $   \ t = -\infty  \ { \rm and } \ \prod_{i=1}^{n} v_i \neq 0. $    \ \  
       As it has been shown before, the statement is true if  \quad ( $ t = -\infty = s $ ) \  or \      
     ( $t = -\infty   $ and  $  s = \infty$ ).  \quad  So assume   \ \ 
    $ t = -\infty < s \in \ \mathbbm{R}\backslash\{0\} $.  \
      Take a \ \ \  $\widetilde{t} \neq 0 \ \ $ with $ \ \ -\infty <  \widetilde{t} < s $, \ 
       it is already proved that      \ \  
       $ \|{\bf D}\|_{s,\widetilde{t}} =    \|\vec{v}\|_{\frac{s \cdot \widetilde{t}}{s-\widetilde{t}}} $ . \ \
        Thus \\  $ \|{\bf D}\|_{s,-\infty}$ \ = \
         $\lim_{ \widetilde{t} \rightarrow -\infty}$              
       [ $ \|{\bf D}\|_{s,\widetilde{t}}$ ] \ = \ 
         $\lim_{ \widetilde{t} \rightarrow -\infty}$ 
         [ $ \|\vec{v}\|_{\frac{s \cdot \widetilde{t}}{s-\widetilde{t}}} $ \ ] = \ $\|\vec{v}\|_{-s} $ .  \\
       For equality one takes the vector \ \   $\vec{z}$ \ := \  \ 
      $ \|\vec{v}\|_{-s} \cdot  ( \frac{1}{v_1} , \ldots , \frac{1}{v_n} ) $  =  
        $ \sqrt[-s] {\sum_{i=1}^{n}  \frac{1}{|v_i|^{s}}}  \cdot  ( \frac{1}{v_1} , \ldots , \frac{1}{v_n} ) $,    
           \\  hence  \ \ $\|\vec{z}\|_{s} = 1 $ \ \ and \ \ $\|{\bf D}(\vec{z})\|_{-\infty} =  \|\vec{v}\|_{-s}$ ,
            \quad      and the proof of \  {\bf Theorem 1} \ is finished.   \\
         \section{Proofs of   Theorem 2 and the Corollaries} 
    The  { \bf Corollary 1}  follows immediately by observing that   \\
    $\frac{s \cdot t}{s-t}$  =  $\frac{(-t) \cdot (-s)}{(-t)-(-s)}$ ,  \quad
                 and \ \ \ $ s \leq  t \ \Longleftrightarrow \ -t \leq -s $.  \\  \\
      Before we can prove \ {  {\bf Theorem 2} } \   we mention a fact, which is easy to confirm.               
    \begin{fact}
    \quad Let \ \ \  $ r, s, t \in  \mathbbm{R}$, \ \ \ such that \ \   $ 0 \neq$ $r \cdot s \cdot t$  
    \ \  and  \  \   $\frac{1}{t}$ = $\frac{1}{r} \ + \ \frac{1}{s}$ .  \\
    Then   \  either \   \qquad  t $<$ r, s \ \ \ or \ \ \  t $>$  r, s.   \\ 
  If  furthermore   \ \quad \ $ t < 0, \ r, \ s $ \ \ or \ \ $ t >  0, \ r, \ s $ ,
                                        \quad  then \quad $ r \cdot s < 0 $ .   
   \end{fact}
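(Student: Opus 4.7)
The plan is to dispatch both claims by a single case distinction on the signs of $r$ and $s$, exploiting that the reciprocal map $x \mapsto 1/x$ is strictly order-reversing on each of the half-lines $(0,\infty)$ and $(-\infty,0)$, while sending positive values to positive and negative to negative. Since $r \cdot s \cdot t \neq 0$, all three reciprocals are well-defined and nonzero.

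For the first assertion I would distinguish three cases. If $r, s > 0$, then $1/r$ and $1/s$ are both positive, so $1/t = 1/r + 1/s$ strictly exceeds each of them; inverting on $(0,\infty)$ gives $0 < t < \min(r,s)$, hence $t < r, s$. If $r, s < 0$, then $1/r$ and $1/s$ are both negative, so $1/t$ is strictly more negative than either; inverting on $(-\infty,0)$ yields $t > r$ and $t > s$. Finally, if $r$ and $s$ have opposite signs, say $r < 0 < s$, then $1/t$ is nonzero; the subcase $1/t > 0$ gives $t > 0 > r$, and comparing with $1/s$ (using $1/r < 0$) gives $1/t < 1/s$, hence $t > s$, so $t > r, s$; the subcase $1/t < 0$ is symmetric and yields $t < r, s$.

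For the second assertion I would simply read off, from the same three cases, which ones are compatible with the strengthened hypothesis that $t$ is on a single side of $0, r, s$ simultaneously. In the case $r, s > 0$ we obtained $0 < t < r, s$, so neither "$t < 0, r, s$" nor "$t > 0, r, s$" can hold; the case $r, s < 0$ is analogous. Hence only the mixed-sign case survives, giving precisely $r \cdot s < 0$. There is no real obstacle here: the only point requiring attention is the order-reversing nature of reciprocation on each sign interval, which must be applied correctly but is entirely elementary.
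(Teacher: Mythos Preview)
Your proof is correct and complete: the three-way case split on the signs of $r$ and $s$, together with the order-reversing property of reciprocation on each half-line, cleanly handles the first assertion, and your elimination argument for the second assertion is sound. The paper itself does not give a proof of this fact at all --- it merely introduces it with the phrase ``a fact, which is easy to confirm'' and then proceeds directly to the proof of Theorem~2 --- so there is no approach to compare against; your argument is a perfectly acceptable way to fill in the details the paper leaves to the reader.
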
   
     Now we are able to prove   \ {{\bf Theorem 2}}.
   \begin{proof}                   
     This theorem  is trivial if \ $ n = 1$. \   So let  \   $ n > 1 $. \
  Let   \ \ $ t < r , s $. \ Now take the  {\bf Theorem 1},  CASE $(\mathbb{C})$,  \  
                                  and note that \ $ r = \frac{s \cdot t}{s-t}$ .  \\ 
     Let    \quad \  $ t > r , s $ . \ \
  In the case of \   $ \| \vec{v}  \|_r $ $\cdot$  $ \| \vec{x} \|_s $  =  0, \ the inequality holds. \ 
  Hence assume \\   $ \| \vec{v}  \|_r $ $\cdot$  $ \| \vec{x} \|_s  \neq 0 $.
   Because of {\bf Fact 1}  and \ \   $\frac{1}{t}$ = $\frac{1}{r} \ + \ \frac{1}{s}$ , \ \ 
     three cases are possible, 
     namely \ \quad  $ 0 > t > r , s , \ \ $ or $ \ \ t > r > 0 > s , \ \ $ or $ \ \ t > s > 0 > r $.   \\
     In the first two cases \ $s$ is negative, and because of  \  $ \| \vec{x} \|_s  \neq 0 $, \ \
      $ x_i \neq 0 $ \ \ holds for every $i$.   \ \
      One has  \ \ \  $\frac{1}{r}$ = $\frac{1}{t} \ + \ \frac{1}{-s}$  \ \ and \ (with  {\bf Fact 1} ) \
                                           $ r < t , -s $. \ \
      Let \ \  for all \ $ i \in \{1, \ldots , n \}$: \ \  $\widetilde{x_i}$ :=  $v_i \cdot x_i$  \\
               and \ $z_i$ := $\frac{1}{x_i}$ .  \ \
      Because of \ \  $ r < t  , -s $ \ \  we get \\  
       $ \sqrt[r]{\sum_{i=1}^{n}|\widetilde{x_i} \cdot z_i|^{r}}$ $\leq$ 
          $\sqrt[t]{\sum_{i=1}^{n}|\widetilde{x_i}|^{t}}$ $\cdot$  $\sqrt[-s]{\sum_{i=1}^{n}|z_i|^{-s}}$  \                $\Longleftrightarrow$ \  $\sqrt[r]{\sum_{i=1}^{n}|\widetilde{x_i} \cdot z_i|^{r}}$  $\cdot$  
           $\sqrt[+s]{\sum_{i=1}^{n}|z_i|^{-s}}$  $\leq$   $\sqrt[t]{\sum_{i=1}^{n}|\widetilde{x_i}|^{t}}$   \         
      $\Longleftrightarrow$ \  $\sqrt[r]{\sum_{i=1}^{n}|v_i|^{r}}$  $\cdot$  
           $\sqrt[s]{\sum_{i=1}^{n}|x_i|^{s}}$  $\leq$   $\sqrt[t]{\sum_{i=1}^{n}|v_i \cdot x_i|^{t}}$  
        $\Longleftrightarrow$ \ \    $ \| \vec{v}  \|_r $ $\cdot$  $ \| \vec{x} \|_s $  \
         $\leq$  $ \| \vec{v} \cdot \vec{x} \|_t $ .
                               \\  
        The remaining last case \ \  $  t > s > 0 > r $   \ \  is treated in the same way:  \ \
        because of \ \ $ 0 > r $ \ \ and   \ \   $ \| \vec{v} \|_r  \neq 0 $, \ \
      $ v_i \neq 0 $ \ \ holds for every $i$.   \ \ 
      Hence define \  
           for all \  $ i \in \{1, \ldots , n \}$: \  \  $\widetilde{x_i}$ :=  $v_i \cdot x_i$ 
              \ and \ $z_i$ := $\frac{1}{v_i}$ , \ \  and then  one can go the same way as only just.
          This finishes the proof.  
         \end{proof}
       The  { \bf Corollary 2}  follows directly from   { \bf Theorem 2}.       
          \begin{remark}    \rm
          However, this version  of the Hölder-inequality is not realy an extension, but  
       equivalent  with the       usual one \ 
       (  $1 = \frac{1}{r} \ + \ \frac{1}{s} \  $  and $ \ 1 < r,s \     \Longrightarrow  $ \    
         $  \| \vec{v} \cdot \vec{x} \|_1 $   $\leq$ 
                                        $ \| \vec{v}  \|_r $ $\cdot$  $ \| \vec{x} \|_s $ ).  
       \\   For positive values of \ $ r, s, t $ \ one can find a short proof in \   \cite{Meise/Vogt},p.103.
       \ The general case  which  includes negative values  is treated in the next section. 
        \end{remark} 
        \section{Measurable Functions}
        In this last section we demonstrate that the generalized Hölder inequality also holds in the
        $ {\cal L}^{\: p} $ function spaces. The proofs rely mainly on the standard Hölder inequality.   
           At first we have to define the \ $ {\cal L}^{\: p} $ spaces  also for negative $ p$.  \\  
                 Let \quad  $ (\Omega, {\cal A} , \mu ) $ \  be a measure space  \  with
         \ \   $ \ \mu(\Omega)  > 0 $. \  We use the conventions \   $ \infty \cdot 0 := 0 $   \ and  \
                 $ \frac{1}{0}  := \infty  $.  \ \  
      Let  \quad  $ {\cal M}_{\Omega} :=   \{ \: f : (\Omega, {\cal A} , \mu ) \rightarrow  \mathbbm{R}  
       \cup \{ -\infty, \infty \} \ | \ f { \rm  \: is \: measurable \: } \}$.  \ \    
     Define   \ for every  \  \ $p < 0$: \  \
       $ {\cal L}^{\: p} :=  {\cal L}^{\infty} :=
    \{  f \in {\cal M}_{\Omega} \ |
    \  \ { \rm ess \ sup} \ \{ |f(\omega)| \ | \: \omega \in \Omega \} < \infty  \ \} $. \\ 
        Then we define \     for all \  $ f \  \in {\cal M}_{\Omega}  $ 
            \[  \| f \|_{p}  :=             
      \begin{cases}   
              \sqrt[p]{\int_\Omega {|f|^{p}} \: d\mu }  \:     \quad  &
       \quad \Longleftrightarrow \quad  0 < \int_\Omega |f|^{p} \: d\mu     < \infty   \\    
         0   &  \quad \Longleftrightarrow \quad  \int_\Omega |f|^{p} \: d\mu    = \infty     \\ 
        \infty  &  \quad \Longleftrightarrow \quad  \int_\Omega |f|^{p} \: d\mu    = 0   \\               
      \end{cases}                                                                   \]   
     Note that  \ for  \   $ f  \in {\cal L}^{\infty} , \      \| f \|_{p}  \ < \ \infty $ \ holds. \                      And  for every \  $p > 0$ \ we take the usual definition,        \
 $ {\cal L}^{\: p}  :=  \{ f: (\Omega, {\cal A} , \mu ) \rightarrow  \mathbbm{R}    \cup \{ -\infty, \infty \} \   
    | \ f \: \in {\cal M}_{\Omega} \ \ { \rm and }  \ \  \int_\Omega {|f|^{p}} \: d\mu < \infty   \}, $ \  
    and \  for all  \ $ f \in {\cal M}_{\Omega}$ take 
  \ \  $ \ \  \| f \|_{p}  :=   \sqrt[p]{ \int_\Omega {|f|^{p}} \: d\mu }$  .  \\
  By making an equivalence relation $N$ ( $ f \approx_N g $ \ $\Leftrightarrow$ \ $ f, g $ 
                           distinguish only on a zero set),
  and  by defining \  $ \bf M_{\Omega} := {\cal M}_{\Omega}/_{\approx_N} $,  \ and \
   for all $p \ \in \mathbbm{R}\backslash\{0\}:  \quad \  {\bf L}^{p} := {\cal L}^{\: p}/_{\approx_N}$,    
  \ \ this definition makes  that the pairs  \ \ 
     \  ( ${\cal M}_{\Omega} , \|..\|_{p} $ ), \  ( ${\bf M_{\Omega}}, \|..\|_{p} $ ), \  
   $( {\cal L}^{p} , \|..\|_{p} ) $ \  and \   $({ \bf L}^{p} , \|..\|_{p} ) $ \  are  {\bf hw spaces}
                             \   for all  $ p \in \mathbbm{R}\backslash\{0\}$.  \ \
      These  homogeneous weights   \ $ \|..\|_{p} $ \ we call  the   Hölder weights   on  ${\cal M}_{\Omega}$,
    ${\cal L}^{p}$, ${\bf M_{\Omega}}$ \ or \ $\bf L^{p} $, respectively.  \ \                         
    It is known that \ $( {\bf L}^{p} , \|..\|_{p} ) $ \ is a pseudonormed space if and only if \ \ $p > 0$, \ 
     and it is a  normed space if and only if \ $ p \geq$  1.  \\ 
      Now let us recall the   well-known Hölder inequality and the  reverse Hölder inequality  
      for  measurable functions.  For    
      two real numbers   $ r, s$  \ such that  \  1 $ < $ $r , s $  \  
                          and \   $ 1 \ =  \ \frac{1}{r} \ + \ \frac{1}{s}$ , \ 
              we have   \ for all    
                 measurable  functions \: $ f, g $  \ \ ( that means \ $ f , g $   $ \in {\cal M}_{\Omega}$ ): \ \
                \    $ \| f \cdot g \|_1 $   $\leq$   $ \| f  \|_r $ $\cdot$  $ \| g \|_s $  .   \\
    For the next  inequality  see e.g.  \cite{Elstrodt},p.226, 
     or   \cite{Mitrinovic/Vasic},p.51,  or     \cite{Hewitt/Stromberg},p.191.
     \begin{corollary}     
     \quad  Let \quad   $ r, s \in  \mathbbm{R} \backslash \{0\} $ \ \ \ such that \ \ $ 1  >  r , s $  \ \ 
      and  \ \  $ 1 \ =  \ \frac{1}{r} \ + \ \frac{1}{s}$ . \\ 
                $ ($  Hence \ \  either \quad  $ r < 0 < s  \quad or  \quad  s < 0 < r \ )$.  \\         
               Then one has  \ for all    
                 measurable  functions \  $ f , g , $ \ that a reverse Hölder inequality holds, i.e.    
             \[      \| f \cdot g \|_1    \geq 
                              \| f  \|_r  \cdot   \| g \|_s \ .   \]
         \end{corollary}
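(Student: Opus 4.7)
The plan is to derive the reverse Hölder inequality from the standard (forward) Hölder inequality by a classical substitution trick. By the symmetry between $f,g$ (and $r,s$) I may assume $s < 0 < r$; combined with $\frac{1}{r}+\frac{1}{s}=1$ this forces $\frac{1}{r} = 1 - \frac{1}{s} > 1$, hence $0 < r < 1$, together with the crucial identity $-r/(1-r)=s$.

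First I would dispose of the degenerate cases, which are more delicate here than in the textbook setting because of the paper's convention that $\|h\|_{p}=0$ whenever $\int |h|^{p}\,d\mu = \infty$ and $\|h\|_{p}=\infty$ whenever $\int |h|^{p}\,d\mu = 0$ (for $p<0$). Concretely: if $\|g\|_{s}=0$ or $\|f\|_{r}=0$ then the right-hand side is $0$ and nothing is to prove; if $\|fg\|_{1}=\infty$ the inequality is immediate; and if $\|g\|_{s}=\infty$, meaning $\int |g|^{s}\,d\mu = 0$, one easily sees $g=0$ a.e., hence $\|fg\|_1 = 0$ but also $\|f\|_r\cdot\|g\|_s = 0\cdot\infty = 0$ under the stated convention, so again the inequality holds. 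After these reductions I may assume $0 < \int_\Omega |f|^{r}\,d\mu < \infty$ and $0 < \int_\Omega |g|^{s}\,d\mu < \infty$; the latter forces $|g|>0$ almost everywhere, so $|g|^{-r}$ is well-defined a.e.

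For the main step I would write $|f|^{r} = |fg|^{r}\cdot |g|^{-r}$ and apply the standard Hölder inequality with conjugate exponents $p=1/r > 1$ and $q=1/(1-r)$ to these two factors, obtaining
\[
\int_{\Omega} |f|^{r}\,d\mu \ \leq \ \Bigl(\int_{\Omega} |fg|\,d\mu\Bigr)^{r}\cdot \Bigl(\int_{\Omega} |g|^{-r/(1-r)}\,d\mu\Bigr)^{1-r}.
\]
Inserting $-r/(1-r)=s$, taking $r$-th roots, and simplifying the exponent of the second factor via $(1-r)/r = 1/r-1 = -1/s$, the right-hand side becomes $\|fg\|_{1}\cdot \|g\|_{s}^{-1}$; multiplying through by $\|g\|_{s}$ yields $\|f\|_{r}\cdot\|g\|_{s}\leq \|fg\|_{1}$, which is the claim.

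The main obstacle is really the bookkeeping for the non-standard values of $\|\cdot\|_{p}$ at $0$ and $\infty$, since the corollary tacitly requires the inequality to make sense in all of these cases. Once one verifies that every pathological configuration (in particular $g$ vanishing on a set of positive measure, or $|f|^{r}$ being non-integrable) corresponds under the paper's conventions to $\|f\|_{r}\cdot \|g\|_{s}=0$ or to $\|fg\|_{1}=\infty$, the computation itself collapses to a one-line application of the classical Hölder inequality, with no additional analytic content beyond the algebraic identity $-r/(1-r)=s$.
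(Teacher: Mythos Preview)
Your main substitution argument---writing $|f|^{r}=|fg|^{r}\,|g|^{-r}$ and applying the classical H\"older inequality with exponents $1/r$ and $1/(1-r)$---is correct and is exactly what the paper does (with the roles of $(r,f)$ and $(s,g)$ interchanged: the paper takes $r<0<s$ and factors $|g|^{s}=|fg|^{s}\,|f|^{-s}$). So the core of the proof is fine and matches the paper.

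There is, however, a genuine slip in your treatment of the degenerate case $\|g\|_{s}=\infty$. With $s<0$, the paper's convention gives $\|g\|_{s}=\infty \Leftrightarrow \int_\Omega |g|^{s}\,d\mu = 0 \Leftrightarrow |g|^{s}=0$ a.e.\ $\Leftrightarrow |g|=\infty$ a.e., \emph{not} $g=0$ a.e.\ as you wrote (indeed $g=0$ a.e.\ would force $|g|^{s}=\infty$ a.e.\ and hence $\|g\|_{s}=0$, the opposite extreme). Consequently your conclusion ``$\|fg\|_1=0$ and $\|f\|_r\cdot\|g\|_s=0\cdot\infty=0$'' is unfounded. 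The correct argument here is the one in the paper: since you have already disposed of $\|f\|_{r}=0$, one has $|f|>0$ on a set of positive measure, whence $|fg|=\infty$ on a set of positive measure and $\|fg\|_{1}=\infty$, so the inequality holds.

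A smaller point: your list of reductions never disposes of $\|f\|_{r}=\infty$, yet you then claim you ``may assume $0<\int|f|^{r}\,d\mu<\infty$''. In fact no separate reduction is needed: your displayed H\"older estimate already bounds $\int|f|^{r}$ by the finite quantity $\bigl(\int|fg|\bigr)^{r}\bigl(\int|g|^{s}\bigr)^{1-r}$ once $\|fg\|_{1}<\infty$ and $0<\int|g|^{s}<\infty$, so this case is covered implicitly---but you should say so rather than assert finiteness without justification.
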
 
     \begin{proof}  \quad    
    Assume \quad  $ r < 0 < s < 1 $.    \ \
   Now we have to distinguish  three cases. \\
   1) \ $ \| f  \|_r $ = 0. \quad \quad  \ \ The inequality holds. \ ( Note that  $ \infty \cdot 0 = 0 ) $. \\
   2) \ $ \| f  \|_r  = \infty . $  \quad \quad  We have  \\
    $ \| f  \|_r  = \infty  \ \  \Longleftrightarrow \ \ 
     {\int_\Omega |f|^{r}  \ d\mu} = 0 
     \ \  \Longleftrightarrow \ \  |f|(\omega) = \infty \ $ (for \ almost \ all \   $ \omega \in \Omega) . $   \\
   In the case of \ \    $ \| g \|_s $ = 0, \ \  the inequality holds.  \ \
   In the case of \ \    $ \| g \|_s  > 0 $, \   there is a measurable set A  with \  A $ \subset \Omega $, 
   \    and \  $   \mu(A) > 0 \   $ and $ \   
  |g|(\omega) > 0 \ (\forall \: \omega \in A),  $ hence it follows
    $  \ |f \cdot g|(\omega) = \infty  \ $ (for  almost  all  \: $ \omega \in A) $,  \
   hence \ \   $ \| f \cdot g \|_1 $  = $\infty $.  \\
   3) \ $ 0 < \| f  \|_r  < \infty . $    \\
   We have \quad $ \frac{1}{s} \ =  \ 1 \: + \:  \frac{1}{-r}    $ , \ \ hence  \ 
     $  1 =   \frac{1}{1/s} +   \frac{1}{-r/s} \:  \  $ and $ \  $ (with  $ {\bf Fact \: 1} ) \ 
       \  1 < \frac{1}{s} , \: \frac{-r}{s} \: . $  \ \ 
     Define   \ \   $  v := |f|^{s} \cdot |g|^{s} $,  \  $ w :=  |f|^{-s} , $ \     hence  \  
     $ v , w  \  \ \in {\cal M}_{\Omega}$,    \  
     and  we have  by the Hölder inequality  \ \  
                              ( note that  \ \ $ 0 < {\int_\Omega  |w|^{\frac{-r}{s}}  \ d\mu} < \infty \: ) $    
     \\   $ \| v \cdot w \|_1 $ $\leq$ $ \| v  \|_{\frac{1}{s}}  $ $\cdot$  $ \| w \|_{\frac{-r}{s}} $ 
     \quad $ \Longleftrightarrow $       \quad 
  $  \sqrt[s]{\int_\Omega  |v \cdot w| \ d\mu} \leq  
           {\int_\Omega  |v|^{\frac{1}{s} } \ d\mu} \ \cdot \   \sqrt[-r]{\int_\Omega  |w|^{\frac{-r}{s}}  \ d\mu} $ \\
   \quad $ \Longleftrightarrow $       \quad 
  $  \sqrt[s]{\int_\Omega  |v \cdot w| \ d\mu} \cdot  \sqrt[+r]{\int_\Omega  |w|^{\frac{-r}{s}}  \ d\mu}  \leq  
           {\int_\Omega  |v|^{\frac{1}{s} } \ d\mu}  \quad  \Longleftrightarrow   \quad            
    \| g  \|_s \cdot \| f \|_r   \leq  \| f \cdot g \|_1 $   \\
   and all three cases of  { \bf Corollary 3 }  has been proved.  
    \end{proof}    
      Now we are able to  formulate the generalized Hölder inequality for measurable functions.  
    \begin{theorem}   
     \quad Let \quad   $ r, s, t \in  \mathbbm{R}$ \ \ \ such that \ \ \  $ 0 \neq r \cdot s \cdot t$  \ \ \
                          and \ \ \ $\frac{1}{t}$ = $\frac{1}{r} \ + \ \frac{1}{s}$ .  \\
           Then we have   \ for all    
               $  \ f , g    \in {\cal M}_{\Omega} \ $   
             \[    t < r , s  \quad \Longrightarrow  \quad     
                                \| f \cdot g \|_t    \leq 
                              \| f  \|_r  \cdot   \| g \|_s \ . \]  
                   \[  t > r , s  \quad \Longrightarrow  \quad     
                               \| f \cdot g \|_t    \geq 
                              \| f  \|_r  \cdot   \| g \|_s \ .      \]     
   \end{theorem}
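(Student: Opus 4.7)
The plan is to reduce Theorem 3 to the standard H\"older inequality together with its reverse form (Corollary 3), via the substitution $F := |f|^t$ and $G := |g|^t$ with new exponents $p := r/t$ and $q := s/t$. One has $\int |FG|\,d\mu = \int |fg|^t\,d\mu$, the constraint $\tfrac{1}{p} + \tfrac{1}{q} = 1$ is exactly the given hypothesis $\tfrac{1}{r} + \tfrac{1}{s} = \tfrac{1}{t}$, and $\|F\|_p = \bigl(\int |f|^r\,d\mu\bigr)^{t/r}$, $\|G\|_q = \bigl(\int |g|^s\,d\mu\bigr)^{t/s}$, so that after applying a H\"older-type inequality to $F, G$ and taking the $t$-th root one recovers $\|f\|_r \cdot \|g\|_s$ on the right-hand side.

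Before running this reduction I would first dispose of the degenerate cases in which one of $\|f\|_r$, $\|g\|_s$, or $\|fg\|_t$ equals $0$ or $\infty$, following the bookkeeping of the proof of Corollary 3. The delicate convention that $\|h\|_p = 0$ when $\int |h|^p\,d\mu = \infty$ for $p < 0$ (and $\|h\|_p = \infty$ when the integral vanishes) forces several subcases, but in each of them the claimed inequality either holds trivially or reduces to the non-degenerate situation on a measurable subset of positive measure.

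In the non-degenerate regime I would split into four subcases according to the sign of $t$ and the direction of the inequality. If $0 < t < r, s$, then $p, q > 1$, so standard H\"older yields $\|FG\|_1 \leq \|F\|_p \|G\|_q$; taking the positive $t$-th root gives $\|fg\|_t \leq \|f\|_r \|g\|_s$. If $t < 0$ and $t < r, s$, then Fact~1 forces $rs < 0$; consequently $p, q < 1$ with opposite signs and $\tfrac{1}{p} + \tfrac{1}{q} = 1$, so Corollary 3 yields $\|FG\|_1 \geq \|F\|_p \|G\|_q$, and the $t$-th root (with $t < 0$) reverses this into the claimed $\|fg\|_t \leq \|f\|_r \|g\|_s$. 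The two subcases with $t > r, s$ are handled symmetrically: if $t < 0$ then $r, s < t < 0$ and $p, q > 1$, so standard H\"older applies and the $t$-th root reverses to give $\|fg\|_t \geq \|f\|_r \|g\|_s$; if $t > 0$ then Fact~1 again forces $rs < 0$, Corollary 3 applies, and the positive $t$-th root preserves the direction $\|fg\|_t \geq \|f\|_r \|g\|_s$.

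The main obstacle will be the bookkeeping: verifying in each of the four subcases that the derived exponents $p, q$ land in the correct half-axis for the H\"older-type theorem being invoked, and that the sign of $t$ makes the $t$-th root go in the desired direction. The algebra of $p = r/t$, $q = s/t$ together with the hypothesis $\tfrac{1}{r} + \tfrac{1}{s} = \tfrac{1}{t}$ and Fact~1 makes this routine but not automatic, and particular care is needed at the transitions between positive and negative values of $t$, $r$, $s$, as well as in combining the degenerate-case analysis with the four non-degenerate subcases.
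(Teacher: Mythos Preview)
Your proposal is correct and follows essentially the same approach as the paper: the paper also splits into the four cases according to the sign of $t$ and the direction $t \lessgtr r,s$, substitutes $v=|f|^{t}$, $w=|g|^{t}$ with exponents $r/t$, $s/t$, invokes either the standard H\"older inequality or Corollary~3, and then takes the $t$-th root. Your treatment is in fact slightly more detailed than the paper's, which only writes out case~2 and does not explicitly handle the degenerate $0/\infty$ subcases for Theorem~3.
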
  
   \begin{proof}
    The proof is inspired by  \  \cite{Meise/Vogt},p.103. \ \ 
                                We distinguish four cases.    \\
   1)   $t < r , s  \quad  { \rm and } \quad  t  > 0 $ \quad  \quad \quad  \quad 
   2)   $t < r , s \quad  { \rm and } \quad t < 0 $ \\
   3)   $t > r , s  \quad { \rm and } \quad  t  > 0 $  \quad  \quad  \quad \quad 
   4)   $t > r , s \quad  { \rm and } \quad t  < 0$  \\                              
      We  only show case 2.  All the other cases follow along the same lines.   \\ 
       \quad Let \ \ $t < r , s \quad  { \rm and } \quad  t < 0$. \\
   Let \quad  $   f , g$   $ \in {\cal M}_{\Omega}$ . \quad Then define \ $v, w $  $ \in {\cal M}_{\Omega}$ ,
   by taking \ $v := |f|^{t}$ ,  $ w := |g|^{t}$  . \\
   Because of \ \  $ 1 = \frac{1}{r/t} + \frac{1}{s/t}$ , \ \ and \ $ 1 >  \frac{r}{t} , \frac{s}{t}  $ , \
    and because of the previous  {\bf Corollary \:3},  \ we have \ \quad
       $ \| v \cdot w \|_1 $   $\geq$ 
                             $ \| v  \|_\frac{r}{t} $ $\cdot$  $ \| w \|_\frac{s}{t} $  \ \    
      $\Longleftrightarrow$  \ \           $ \| f \cdot g \|_t $ $\leq$ $ \| f  \|_r $ $\cdot$  $ \| g \|_s $ . 
   \end{proof}   
      \newpage      
      Acknowledgements:  \\    
     The author  thanks \ Prof.  Dr.  Marc Keßeböhmer,  Dr. Björn  Rüffer and  
                                   Dr.  Gencho Skordev for support and help.     
  
              \nopagebreak                                                           
                                          

\begin{thebibliography}{99}
   \bibitem{Meise/Vogt} Reinhold Meise, Dietmar Vogt, " Introduction to Functional Analysis ",  \
                                   Oxford University Press   1997
   \bibitem{Elstrodt} Jürgen Elstrodt, " Maß- und Integrationstheorie ",  \
                                    Springer  1996  
   \bibitem{Mitrinovic/Vasic} $ \rm  Mitrinovi\acute{c}, Vasi\acute{c}, $  " Analytic Inequalities ", 
     Springer 1970      
   \bibitem{Hewitt/Stromberg} $ \rm  Hewitt , Stromberg $,  " Real and Abstract Analysis ", 
     Springer 1969                                                              
    \end{thebibliography}
    \end{document}